\documentclass[11pt,a4paper]{amsart}

\usepackage[utf8]{inputenc}
\usepackage[T1]{fontenc}
\usepackage{lmodern}
\usepackage[margin=30mm]{geometry}
\usepackage{microtype}
\usepackage{amsmath,amssymb}
\usepackage{graphicx}
\usepackage{textcomp}
\usepackage{listings}
\usepackage{caption}
\usepackage{needspace}
\usepackage{etoolbox}
\usepackage{placeins} % 导言区引入
\usepackage[hidelinks]{hyperref}

\hypersetup{
    pdftitle={A strict amplitude lower bound for the van der Pol equation and extensions to symmetric Lienard systems},
    pdfauthor={Changjian Liu and Ziwei Zhuang},
    pdfsubject={Limit cycle amplitude bounds for symmetric Lienard systems},
    pdfkeywords={van der Pol equation, Lienard system, limit cycle, amplitude, Bernstein basis},
    pdfstartview=FitH
}

\lstdefinestyle{wlcert}{
    language=Mathematica,
    basicstyle=\ttfamily\footnotesize,
    keywordstyle=\bfseries,
    commentstyle=\itshape,
    showstringspaces=false,
    columns=fullflexible,
    keepspaces=true,
    upquote=true,
    breaklines=true,
    frame=single,
    framerule=0.3pt,
    numbers=none,
    xleftmargin=1.5em,
    framexleftmargin=0.3em,
    captionpos=t
}
\AtBeginDocument{\numberwithin{lstlisting}{section}}
\BeforeBeginEnvironment{lstlisting}{\Needspace{8\baselineskip}}

\numberwithin{equation}{section}
\theoremstyle{plain}
\newtheorem{thm}{Theorem}[section]
\newtheorem{lem}{Lemma}[section]
\newtheorem{cor}{Corollary}[section]
\newtheorem{prop}{Proposition}[section]
\theoremstyle{remark}
\newtheorem{rmk}{Remark}[section]
\theoremstyle{definition}

\makeatletter
\def\paragraph{\@startsection{paragraph}{4}%
  \z@\z@{-\fontdimen2\font}%
  {\normalfont\bfseries}} % 在这里设置字体，如 \bfseries (加粗) 或 \sffamily (无衬线)
\makeatother

\title[Amplitude lower bounds for the van der Pol equation]
{A strict amplitude lower bound for the van der Pol equation
and extensions to symmetric Li\'enard systems}

\author[C. Liu]{Changjian Liu}
\address{School of Mathematics (Zhuhai), Sun Yat-sen University,
Zhuhai, 519086, P. R. China}
\email{liuchangj@mail.sysu.edu.cn}

\author[Z. Zhuang]{Ziwei Zhuang}
\address{School of Mathematics and System Science,
Guangdong Polytechnic Normal University, Guangzhou, 510665, P. R. China}
\email{zhuangzw@gpnu.edu.cn}
\thanks{Corresponding author: Ziwei Zhuang.}

\keywords{Van der Pol equation, Li\'enard system, limit cycle,
amplitude, Bernstein basis}
\date{}

\begin{document}

\begin{abstract}
We prove that the amplitude of the unique limit cycle of the
van der Pol equation
$\ddot{x}+\mu(x^2-1)\dot{x}+x=0$
is strictly greater than $2$ for every $\mu>0$.
The proof combines orbit comparison in the relaxation oscillation
regime, trigonometric comparison curves in the weakly nonlinear
regime, and polynomial energy estimates in the intermediate regime.
The polynomial sign conditions arising in the latter two regimes
are certified using exact rational arithmetic and Bernstein
coefficients.
The orbit comparison method also applies to a class of symmetric
Li\'enard systems.
Under suitable monotonicity assumptions, we establish a sufficient
criterion for the limit cycle amplitude to exceed a critical value
determined by the damping function.
\end{abstract}

\maketitle

\section{Introduction}

% REVISION 2026-09-20: foreground the van der Pol amplitude theorem.
Li\'enard systems provide a classical framework for the study of nonlinear
and self-sustained oscillations. A general Li\'enard system is given by
\begin{align}\label{sys: generalized Lienard}
    \dot{x}=y-F(x),\quad \dot{y}=-g(x),
\end{align}
or equivalently,
\[
    \ddot x+F'(x)\dot x+g(x)=0,
\]
where $F$ and $g$ are sufficiently smooth functions.
Physically, $F'$ is the damping coefficient
and $g$ is the restoring term.
The existence, uniqueness, and distribution of its limit cycles have been
extensively studied, from the classical work of Levinson and Smith
\cite{LevinsonSmith1942} to subsequent developments surveyed by Llibre and
Zhang \cite{LlibreZhang2017}. For polynomial Li\'enard systems, the
conjecture of Lins Neto, de Melo, and Pugh \cite{lins1977on} on the maximal
number of limit cycles has motivated much of this research.

Beyond existence and uniqueness, the amplitude of a limit cycle, defined
as the maximum of $|x|$ along the orbit, quantifies the size of the
oscillation. Explicit amplitude bounds help locate the cycle in the
phase plane. Such estimates have been obtained for symmetric Li\'enard
systems by Yang and Zeng \cite{yang2015upper} and Cao and Liu
\cite{cao2017estimate}, and for more general Li\'enard-type systems by
Turner, McClintock, and Stefanovska \cite{turner2015maximum}.

An important special case of symmetric Li\'enard systems is the van der Pol
equation
\begin{align}\label{sys: vdP in equation form}
    \ddot{x}+\mu(x^2-1)\dot{x}+x=0,\quad \mu>0.
\end{align}
Introduced by van der Pol \cite{vanderpol1926on} in the study of
relaxation oscillations, it provides a representative model for investigating
amplitude bounds. Its unique stable limit cycle and phase-plane geometry
are classical; see Lefschetz
\cite[Chapter~XI, Sections~2--3]{lefschetz1963differential}.
We denote its amplitude by $A_{\text{van}}(\mu)$.

In the weakly nonlinear limit $\mu\to0^+$, the amplitude has the expansion
\begin{align*}
    A_{\text{van}}(\mu)=2+\frac{1}{96}\mu^2
    -\frac{1033}{552960}\mu^4
    +\frac{1019689}{55738368000}\mu^6
    +\mathcal O(\mu^8).
\end{align*}
Recursive constructions are given by Buonomo \cite{buonomo1998periodic}
and by L\'opez and L\'opez-Ruiz \cite{lopez2007approximating}.
Amore, Boyd, and Fern\'andez \cite{amore2018high} study high-order
expansions and Pad\'e-type resummation.
In the relaxation limit $\mu\to+\infty$, one has
\begin{align*}
    A_{\text{van}}(\mu)=2-\frac{\alpha_{\rm Ai}}{3}\mu^{-4/3}
    +o\left(\mu^{-4/3}\right),
\end{align*}
where $\alpha_{\rm Ai}<0$ is the largest negative zero of the Airy
function $\operatorname{Ai}$ and $-\alpha_{\rm Ai}/3\approx0.779369$;
see Cartwright \cite{cartwright1952van} and Amore et al.\
\cite{amore2018high}.
Both expansions imply $A_{\text{van}}(\mu)>2$ in their respective
asymptotic regimes, but do not control the full intermediate range.

Uniform upper bounds have been established by Odani \cite{odani2000limit}
and in the works on amplitude estimates cited above.
In particular, Yang and Zeng \cite{yang2015upper} obtained $A_{\text{van}}(\mu)<\sqrt{5},$ 
Cao and Liu \cite{cao2017estimate} obtained
$A_{\text{van}}(\mu)<2.0976$, while Turner, McClintock, and Stefanovska
\cite{turner2015maximum} obtained $A_{\text{van}}(\mu)<2.0672$.
Cao and Liu also proved $A_{\text{van}}(\mu)>2$ for $\mu=1,2$.
Numerical computations suggest that the amplitude first increases and
then decreases, with a maximum near $\mu=3.3$.
Odani \cite{odani2000limit} suggested that
$2<A_{\text{van}}(\mu)<2.0235$ for every $\mu>0$.
The principal result of this paper is a rigorous proof of the strict
lower-bound part of this conjecture throughout the positive parameter range.
\begin{thm}\label{thm: A of vdP > 2}
    The amplitude of the unique limit cycle of the van der Pol equation
    \eqref{sys: vdP in equation form} satisfies
    $A_{\text{van}}(\mu)>2$ for every $\mu>0$.
\end{thm}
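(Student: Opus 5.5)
The plan is to reduce the statement to the sign of a single quantity attached to one explicit orbit, and then to establish that sign on three overlapping ranges of $\mu$. Write the van der Pol equation in Li\'enard form \eqref{sys: generalized Lienard} with $g(x)=x$ and $F(x)=\mu(x^3/3-x)$. By the symmetry $(x,y)\mapsto(-x,-y)$, the unique limit cycle is symmetric. Its amplitude is attained at the two points where it meets the isocline $y=F(x)$, since $\dot x=0$ there.

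For $a>1$, take the orbit through $(a,F(a))$. It runs clockwise through the lower half plane and next meets the isocline at a point $(-b(a),F(-b(a)))$ with $b(a)>1$. The limit cycle is the unique fixed point $a=A_{\text{van}}(\mu)$ of $a\mapsto b(a)$. Because the origin is a repelling focus and the cycle is unique and attracting, we have $b(a)>a$ for $1<a<A_{\text{van}}(\mu)$ and $b(a)<a$ for $a>A_{\text{van}}(\mu)$. Hence the statement is equivalent to the \emph{expansion} of the single orbit through $P_0=(2,F(2))=(2,2\mu/3)$:
\[
b(2)>2 .
\]
To make this testable, use $E=\tfrac12(x^2+y^2)$, which satisfies $\dot E=-xF(x)$. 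On the isocline, $E(-b,F(-b))=\tfrac12\bigl(b^2+F(b)^2\bigr)$, and this is strictly increasing in $b$ for $b>1$. So $b(2)>2$ is equivalent to
\[
\Delta(\mu):=-\int_{\text{half-orbit from }P_0} xF(x)\,dt>0 .
\]
A sufficient route is to exhibit a barrier: a curve or Lyapunov-type function showing that the half-orbit from $P_0$ cannot reach the isocline with $x\ge -2$. I will split into three regimes of $\mu$.

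\textbf{Relaxation regime ($\mu\ge\mu_1$).} Use the rescaling $y=\mu z$, under which the orbit follows the slow branch of $z=x^3/3-x$ until the fold at $x=1$. I will compare the true orbit with explicit piecewise curves: a curve lying below the slow branch near the fold, and a nearly horizontal fast jump to the left. The aim is to show that the orbit from $P_0$ overshoots the symmetric point $(-2,-2\mu/3)$. The key mechanism is that after the fold the orbit drops slightly below $F(1)$, so it lands beyond $x=-2$; this is the source of the positive $\mu^{-4/3}$ correction. The comparison is done via slope inequalities $dy/dx=-x/(y-F(x))$ against the comparison curves, with constants that are valid for all $\mu\ge\mu_1$.

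\textbf{Weakly nonlinear regime ($0<\mu\le\mu_0$).} Here $A_{\text{van}}(\mu)-2\sim\mu^2/96$, so first-order averaging gives zero and the second-order sign must be captured. I will build trigonometric comparison curves $x=r(\theta)\cos\theta$, $y=r(\theta)\sin\theta$ with $r(\theta)=2+\mu r_1(\theta)+\mu^2 r_2(\theta)$, where $r_1,r_2$ come from the averaging expansion, slightly perturbed so that the vector field crosses the curve in one direction along the whole half-turn. Checking transversality gives an inequality polynomial in $\mu$, $\cos\theta$ and $\sin\theta$. After the substitution $t=\tan(\theta/2)$, I will certify its sign by Bernstein coefficients in exact rational arithmetic.

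\textbf{Intermediate regime ($\mu_0\le\mu\le\mu_1$).} On subintervals of $\mu$, I will use polynomial energy functions $V(x,y;\mu)$ whose level set through $P_0$ reaches the isocline beyond $x=-2$, and whose derivative along the flow has a fixed sign in the relevant region. Both the derivative condition and the level-set condition are polynomial inequalities on boxes in $(x,y,\mu)$, which I will again certify with Bernstein expansions after subdivision.

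I expect the main obstacle to be the weakly nonlinear and lower intermediate range. There the margin $A_{\text{van}}-2$ is $O(\mu^2)$, so the comparison curves must be accurate to second order and the certified polynomials are nearly degenerate as $\mu\to0$. I would handle this by factoring out the exact power of $\mu$ before applying Bernstein certification. The overlap of the three ranges is then checked explicitly.
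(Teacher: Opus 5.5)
Your three-regime architecture matches the paper's: the reduction to the expansion of the half-orbit through $(2,F(2))$ is equivalent to the paper's intercept criterion, and you use the same kinds of tools with exact Bernstein certificates. (A small slip: $\tfrac12(b^2+F(b)^2)$ is not increasing on $(1,\sqrt3)$ once $\mu>\sqrt3$. The equivalence $b(2)>2\Leftrightarrow\Delta(\mu)>0$ still holds, but only because $b^2+F(b)^2<4+F(2)^2$ for $b\in(1,\sqrt3]$.)

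\textbf{The weakly nonlinear step fails as stated.} A curve built from second-order information is not accurate enough. An inner barrier must still reach $|x|=2$, so its inward displacement is bounded by the amplitude margin, which is $O(\mu^2)$ with a small constant. The outward push it produces is therefore $O(\mu^3)$ with a small constant. The residual of a second-order truncation is also $O(\mu^3)$, but with an $O(1)$ oscillating coefficient.

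In the paper's polar variables ($r=x^2+y^2$, cycle $4+\mu\varphi_1+\mu^2\varphi_2+\mu^3\varphi_3+\cdots$) this is decisive:
\begin{itemize}
\item At order $\mu^2$ the crossing quantity is just the $\theta$-derivative of the displacement. The displacement must therefore be a constant $k$, and reaching $|x|=2$ forces $k\le\varphi_2(0)=1/24$.
\item The curve $4+\mu\varphi_1+\mu^2(\varphi_2-k)$ then has crossing quantity $\mu^3\bigl(\varphi_3'(\theta)-2k\sin^2\theta(1-8\cos^2\theta)\bigr)+O(\mu^4)$.
\item At $\theta=\pi/3$ this equals $\mu^3(-137/32+3k/2)<0$.
\end{itemize}
So you need at least $\varphi_3$. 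You also need a $\theta$-dependent $O(\mu^3)$ correction of the shift, since a constant shift then gives $-2k\mu^3$ at $\theta=\pi/2$.

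Even so, a low-order curve can only be expected to work on a short interval $(0,\mu_0]$. That pushes the energy method toward $\mu\to0^+$, where the paper observes the required index grows. The paper instead uses the $\lambda=\mu^2/(1+\mu^2)$ resummation through order $\mu^{24}$. It rescales as $R_n=4T_n/T_n(0,\mu)$ so that the curve passes exactly through $(\pm2,0)$, and certifies it on all of $0<\mu<1$.

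\textbf{The intermediate step lacks its mechanism.} As you formulate it, you need three things: $\dot V$ of fixed sign on the region actually swept by the half-orbit (unknown without further barriers), monotonicity of $V$ along the left isocline, and three-variable sign checks on non-rectangular sets. The paper's energy functions are not generic. The recursion for $h_{k,n}$ is chosen so that every $y$-dependent term cancels, giving $\dot H_n=-g(x)h_{2n-1,n}(x,\mu)$. Positivity of $-x\,h_{175,88}$ for $0<|x|\le2$, $1\le\mu\le12$ then contradicts $\oint\dot H_n\,\mathrm dt=0$ for a cycle inside $|x|\le2$. This needs no level sets, no reference to $(2,F(2))$, and no localization of the orbit. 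The check reduces to positivity of a bivariate polynomial in $(x^2,\mu^2)$ on a box.

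\textbf{The relaxation step is only a heuristic.} You describe the Airy-layer mechanism but give no estimate and no explicit $\mu_1$. An explicit, moderate threshold matters because the energy certificate becomes more expensive as $\mu$ grows. The paper does not need the drop at the fold at all. The true orbit passes below the fold point, so it suffices that the orbit through the fold point overshoots. This is proved by comparing it with the backward orbit from $(-2,F(-2))$ (in your orientation) through the involution $\sigma_I$ defined by $I(v_0,x,\sigma_I(x))=0$. That yields the sufficient condition $\kappa\mu^{-4/3}\le v_0$ and, with the exact certificate $I(11/100,-1,2)>0$, the threshold $\mu\ge12$.
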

The conjectured upper bound and unimodality remain outside the scope of
this paper. Our proof combines orbit comparison in the relaxation oscillation
regime, trigonometric comparison curves in the weakly nonlinear regime,
and polynomial energy estimates in the intermediate regime.
The latter two constructions lead to finite polynomial sign checks,
performed with exact rational arithmetic in the Bernstein basis.
The intermediate construction builds on the methods of Giacomini and
Neukirch \cite{giacomini1997number,giacomini1998improving}, also used for
amplitude estimates in \cite{turner2015maximum}.

The orbit comparison method for large $\mu$ applies more generally to
symmetric Li\'enard systems. It yields a criterion for the amplitude to
exceed a critical value determined by $F$, together with
an explicit sufficient lower bound on the parameter.
For these general results, we impose the following classical symmetry
and monotonicity hypothesis on system \eqref{sys: generalized Lienard}.
\begin{itemize}
    \item[(H1)] 
    $F(x)$ and $g(x)$
    are odd functions in $C^1(\mathbb R),$
    $g(x)>0$ for $x>0,$
    $F'(0)<0,$
    $F'(x)$ has a unique positive zero at $x=a,$
    and $\lim_{x\rightarrow+\infty}F(x)=+\infty.$
\end{itemize}
Under assumption (H1), we see that
\begin{itemize}
    \item 
    system \eqref{sys: generalized Lienard} is $\mathbb Z_2$-symmetric and has a unique equilibrium point at the origin;
    \item 
    $F(x)$ is $S$-shaped and has a unique positive zero at $x=b>a,$
    and the equation $F(x)=F(-a)$ has a unique positive root at $x= c >b$ (see Fig. \ref{fig:lienard_phase_portrait}).
\end{itemize}
The following result is well known for the Li\'enard system under (H1).
\begin{thm}
    Assuming that (H1) holds, we have the following.
    \begin{itemize}
        \item 
        System \eqref{sys: generalized Lienard} has a unique limit cycle.
        
        \item 
        The limit cycle is orbitally asymptotically stable and attracts every non-equilibrium solution.

        \item  
        The amplitude of the limit cycle is greater than $b.$
    \end{itemize}
\end{thm}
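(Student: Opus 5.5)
The plan is to follow the classical Liénard--Zhang Zhifen route, using the energy $\lambda(x,y)=\tfrac12 y^2+G(x)$ with $G(x)=\int_0^x g(s)\,ds$. Along orbits of \eqref{sys: generalized Lienard} one has $\dot\lambda=-g(x)F(x)$. By (H1) and the oddness of $F$, $F<0$ on $(0,b)$ and $F>0$ on $(b,\infty)$. Since $F$ decreases on $(0,a)$, increases on $(a,\infty)$, and $F(x)\to+\infty$, the points $b$ and $c$ are well defined. Hence $xF(x)<0$ for $0<|x|<b$.

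\textbf{Existence.} Near the origin, $F'(0)<0$ makes the origin a source: any orbit inside the region $\{|x|<b\}$ has $\dot\lambda>0$ except on $x=0$, so small level curves of $\lambda$ form an inner boundary. For the outer boundary I will use the standard construction. Take a point $(0,y_0)$ with $y_0$ large and follow its orbit to its first return to the negative $y$-axis at $(0,y_1)$. I will show that $\lambda(0,y_1)-\lambda(0,y_0)=-\oint g F\,dt$ along the half-orbit, written as $\int F\,dy$, tends to $-\infty$ as $y_0\to\infty$. Split the arc into the parts with $|x|\le c$ and $|x|\ge c$.
\begin{itemize}
\item On the part with $|x|\le c$, the contribution is bounded: there $|F|\le \max_{[0,c]}|F|$ and $dt=dx/(y-F)$ with $|y|$ large.
\item On the part with $|x|\ge c$, the contribution $\int F\,dy$ is negative and grows in magnitude, since $F\ge F(c)>0$ is bounded below and the $y$-variation of the arc there is unbounded.
\end{itemize}
By $\mathbb Z_2$ symmetry, $(0,-y_1)$ then lies below $(0,y_0)$, and the full orbit from $(0,y_0)$ stays inside the closed curve formed by the arc and its reflection. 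Poincaré--Bendixson gives a periodic orbit in the resulting annulus.

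\textbf{Uniqueness and stability.} I will follow the Liénard/Zhang argument. Parametrize symmetric closed orbits by the crossing point $(0,y_0)$ and set $\Delta(y_0)=\lambda(0,-y_1)-\lambda(0,y_0)$ along the half-orbit. Using $\dot\lambda = -gF$, rewrite $\Delta$ as
\[
\int_{\gamma\cap\{|x|<b\}} F\,dy\quad\text{plus}\quad \int_{\gamma\cap\{|x|\ge b\}} F\,dy,
\]
or equivalently in $dx/(y-F)$ form on the strip. Then compare two orbits $\gamma_1\subset\gamma_2$ with $y_0$-values $y_0^{(1)}<y_0^{(2)}$.
\begin{itemize}
\item In the strip $|x|<b$, the $x$-parametrized integrand $-F\,g/(y-F)$ is smaller in absolute value on the outer orbit, because $|y-F|$ is larger there.
\item Outside the strip, the $y$-parametrized negative contribution is larger in magnitude on the outer orbit, because its $y$-range covers that of the inner one and $F$ is increasing on $(a,\infty)\supset(b,\infty)$.
\end{itemize}
So $\Delta$ is strictly decreasing in $y_0$, and it is positive for small $y_0$. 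Hence $\Delta$ has exactly one zero, which gives uniqueness. The sign change of $\Delta$ gives orbital asymptotic stability. Since the annulus construction works for every orbit other than the origin, every non-equilibrium solution is attracted to the cycle.

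\textbf{Amplitude.} Suppose the cycle $\Gamma$ satisfies $\max_\Gamma|x|\le b$. Then $\oint_\Gamma \dot\lambda\,dt=-\oint_\Gamma g(x)F(x)\,dt>0$, because the integrand is nonnegative and positive off $x=0$. This contradicts periodicity, so the amplitude exceeds $b$.

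The main obstacle is the monotonicity comparison in the uniqueness step. Splitting the inner and outer orbit arcs by $x=\pm b$ gives matching pieces only after an appropriate change of variable. I will use $x$ on the strip $|x|<b$ and $y$ outside it, and the monotonicity of $F$ beyond $a$, where $a<b$, is exactly what the outer comparison needs. Since the statement is declared well known, in the paper I would simply cite Zhang Zhifen's uniqueness theorem for the uniqueness step and give only the short energy argument for the amplitude claim.
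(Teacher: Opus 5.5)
Your strategy is the classical Li\'enard argument, and it is what the paper relies on: the paper gives no proof of this theorem and cites Lefschetz, Perko, Hartman, and Jordan--Smith. The argument uses the energy $\lambda=\tfrac12y^2+G(x)$ and the half-return difference $\Delta$, with an $x$-parametrization on $0<x<b$ and a $y$-parametrization on $x\ge b$. Your uniqueness comparison, the estimate on $x\ge c$, and the amplitude argument are correct.

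One justification in the existence step fails, however. You bound the part $0\le x\le c$ by saying that $|y|$ is large there. That holds on the outgoing arc, but not on the returning arc, because large orbits can hug the nullcline after turning. Take van der Pol in Li\'enard form ($F=x^3/3-x$, $g=x$, $c=2$). On the curve $y=F(x)-1$ with $x\ge2$ the field is $(-1,-x)$. Since $x<x^2-1$ there, leftward-moving orbits cannot cross this curve downward. Hence the returning arc meets $x=c$ with $-1/3<y<2/3$, however large $y_0$ is. So the positive return contribution $\int_0^b g|F|/(F-y)\,\mathrm dx$ is not controlled by the reason you give.

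The bound is still true, by the same orbit comparison you use for uniqueness. Fix one orbit that crosses $x=b$ transversally. On $[0,b]$ its returning arc satisfies $F-y\ge\delta>0$, and the returning arc of any larger orbit lies below it. That contribution is therefore at most $\delta^{-1}\int_0^b g|F|\,\mathrm dx$, while the pieces over $b<x<c$ are negative. So prove the monotonicity of $\Delta$ first, for $y_0$ beyond the orbit through $(b,0)$; smaller orbits have $\Delta>0$ directly. Then deduce $\Delta\to-\infty$ from it.

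Finally, do not cite Zhang Zhifen's theorem for the uniqueness step. It requires $F'/g$ nondecreasing on each half-line, which (H1) does not provide; that is the role of the paper's (H2), and (H2) is assumed only on $(0,c)$. The right reference is Li\'enard's theorem as stated in Perko, whose hypotheses follow from (H1) with $b$ as the positive zero of $F$.
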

For classical formulations and proofs of this result, see
Lefschetz \cite[pp.~267--271]{lefschetz1963differential},
Perko \cite[pp.~253--257]{perko2001differential},
Hartman \cite[pp.~179--181]{hartman1982ordinary},
and Jordan and Smith \cite[pp.~394--399]{jordan2007nonlinear}.
We seek a criterion ensuring that the amplitude of the unique limit cycle of system \eqref{sys: generalized Lienard},
denoted by $A_{\text{Li\'e}},$ is greater than $c.$
We introduce the orbit comparison used to establish this criterion.

The $x$-nullcline
\[
L_F := \left\{(x,y)\;|\;y=F(x),\,x\in\mathbb R\right\}
\]
divides the phase plane into upper and lower regions.
Write the orbit segments of system \eqref{sys: generalized Lienard}
lying above $L_F$ as graphs
\begin{align*}
    y={Y}(x;x_0,y_0),\quad y_0\ge F(x_0) .
\end{align*}
Here $(x_0,y_0)\ne(0,0)$; when $y_0=F(x_0)$, the graph denotes the adjacent orbit segment above $L_F$.
Such a boundary segment leaves the endpoint in forward time for $x_0<0$ and in backward time for $x_0>0$.
By (H1), $Y(x;x_0,y_0)$ is well defined on its interval above $L_F$ and cannot blow up at a finite value of $x$.
It is strictly increasing for $x<0$ and strictly decreasing for $x>0$.
Central symmetry and the attraction property of the unique limit cycle imply that, for $x_0\ge b$,
$A_{\text{Li\'e}}>x_0$ if and only if
\begin{align*}
    {Y}\left(0;-x_0,- F(x_0)\right)>
    {Y}\left(0;x_0, F(x_0)\right).
\end{align*}
In the case $x_0=c$, the bound $A_{\text{Li\'e}}>c$ follows from the stronger condition
\begin{align}\label{ineq:y_left>y_right}
    {Y}\left(0;-a, F( c )\right)\ge
    {Y}\left(0; c , F( c )\right),
\end{align}
because of 
${Y}\left(0;-a, F( c )\right)<{Y}\left(0;- c ,- F( c )\right)$; 
see Fig. \ref{fig:lienard_phase_portrait}.
To establish this orbit comparison, we introduce an auxiliary integral.

\begin{figure}[htbp]
    \centering
    % [width=0.75\textwidth] 控制图片宽度为正文宽度的 75%，可根据页面效果微调
    \includegraphics[width=1\textwidth]{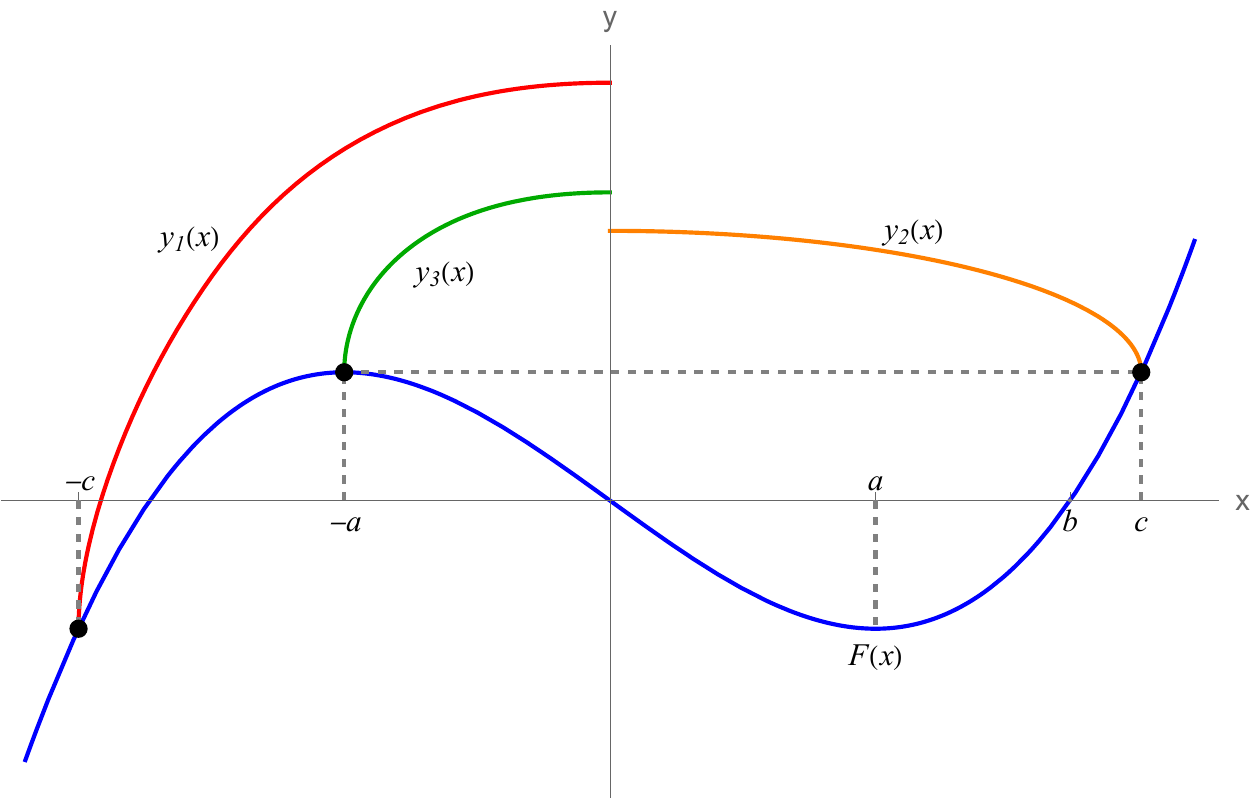}
    
    % 图题说明：支持包含 LaTeX 数学公式
    \caption{The graphs of $F(x),$ $y_1(x):=Y\left(x;-c, -F( c )\right),$ $y_2(x):=Y\left(x;c, F( c )\right)$ and $y_3(x):=Y\left(x;-a, F( c )\right)$ in the case that inequality \eqref{ineq:y_left>y_right} holds.}
    
    % 交叉引用标签（必须放在 \caption 之后）
    \label{fig:lienard_phase_portrait}
\end{figure}

Define
\begin{equation}\label{def: core integral}
    I(v,x,u):=\int_{x}^{u}{\frac{g(s)}{F(s)-F( c )-v} \mathrm d s},\quad 
    v>0 \quad\text{and}\quad x,u< \bar\zeta(v),
\end{equation}
where $\bar\zeta(v)>c$ is the unique real root of the equation $F(x)-F(c)=v>0.$
By (H1), we can see that $F(x)$ is strictly decreasing on $(-a,a)$ and strictly increasing elsewhere;
hence $F(x)<F\left(\bar\zeta(v)\right)=F(c)+v$ for $x< \bar\zeta(v).$ 
Thus, \eqref{def: core integral} is well defined.
The next theorem gives the uniqueness of the positive zero of $I(v,-a,c)$ under an additional hypothesis.
\begin{itemize}
    \item[(H2)] $F'(x)/g(x)$ is nondecreasing on $(0,c)$.
\end{itemize}

\begin{thm}\label{thm: uniqueness of v0}
    Assume that (H1) and (H2) hold.
    The function $v\mapsto I(v,-a,c)$ has a unique positive zero
    at which the sign changes.
    
\end{thm}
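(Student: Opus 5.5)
The plan is to rewrite $I(v,-a,c)$ as an integral against a fixed signed density in the variable $u=F(s)$. Uniqueness of the zero then follows from a variation-diminishing argument, because the kernel $K_v(u):=1/(u-F(c)-v)$ is totally positive. Existence of the zero and the sign change will come from the behaviour of $I$ as $v\to0^+$ and as $v\to+\infty$.

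\textbf{Step 1 (change of variables).} Since $F(-a)=F(c)$, $F$ decreases on $(-a,a)$ and increases on $(a,c)$, we have $F(s)\le F(c)$ on $[-a,c]$. Hence $K_v(F(s))<0$ there for every $v>0$. I would split $[-a,c]$ into $[-a,0]$, $[0,a]$ and $[a,c]$, where $F$ is monotone on each piece, and substitute $u=F(s)$ on each piece. Denote the inverse branches by $s_1(u)\in(-a,0)$, $s_2(u)\in(0,a)$ and $s_3(u)\in(a,c)$. This should give
\[
I(v,-a,c)=\int_{F(a)}^{F(c)} K_v(u)\,\rho(u)\,\mathrm du .
\]
On $(F(a),0)$ both $s_2$ and $s_3\in(a,b)$ contribute, and
\[
\rho(u)=\frac{g}{F'}(s_3)+\frac{g}{|F'|}(s_2)>0 .
\]
On $(0,F(c))$ the branches $s_1$ and $s_3\in(b,c)$ contribute, and
\[
\rho(u)=\frac{g}{F'}(s_3)-\frac{g}{|F'|}(s_1).
\]
I will need to double-check the orientation signs in this computation carefully.

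\textbf{Step 2 (one sign change of $\rho$ via (H2)).} Put $t=-s_1\in(0,a)$. Since $g/|F'|$ is even, the factor $g/|F'|(s_1)$ equals $g/|F'|(t)$, and $u=-F(t)$ increases with $t$. On $(0,F(c))$ the sign of $\rho$ is the sign of
\[
\frac{|F'|}{g}(t)-\frac{F'}{g}(s_3),
\]
because $\rho(u)$ equals this difference multiplied by the positive factor $\frac{g}{|F'|}(t)\,\frac{g}{F'}(s_3)$. By (H2), $-F'/g$ is nonincreasing on $(0,a)$ and $F'/g$ is nondecreasing on $(b,c)$. Both $t$ and $s_3$ increase with $u$, so the difference is nonincreasing in $u$. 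As $u\to F(c)$ we have $t\to a$ and $F'(t)\to0$, so $\rho<0$ near $u=F(c)$. Together with $\rho>0$ on $(F(a),0)$, this shows that $\rho$ has exactly one sign change on $(F(a),F(c))$, from $+$ to $-$, at some point $u^*$.

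\textbf{Step 3 (uniqueness and strict sign change).} Suppose $I(v_0)=0$ and let $v>v_0$. The ratio $r(u)=K_v(u)/K_{v_0}(u)=(u-F(c)-v_0)/(u-F(c)-v)$ is positive and strictly monotone in $u$; I expect it to be decreasing, to be checked. Then
\[
I(v)=\int r\,K_{v_0}\rho\,\mathrm du=\int \bigl(r-r(u^*)\bigr)K_{v_0}\rho\,\mathrm du ,
\]
where the second equality uses $\int K_{v_0}\rho\,\mathrm du=I(v_0)=0$. In the last integrand the two factors change sign at the same point $u^*$, so the integrand has constant sign. This forces $I(v)$ to have a fixed strict sign for $v>v_0$, and the opposite sign for $v<v_0$. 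Hence the zero is unique and $I$ changes sign there.

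\textbf{Step 4 (existence).} As $v\to+\infty$,
\[
vI(v)\to-\int_{-a}^{c}g=-(G(c)-G(a))<0,
\]
so $I(v)<0$ for large $v$. As $v\to0^+$, the piece near $s=c$ is at least $-C\log(1/v)$ with $C\approx g(c)/F'(c)$, since $F'(c)>0$. Near $s=-a$ we have $F(s)-F(c)=o(s+a)$ because $F'(-a)=0$. For any $\varepsilon>0$ this gives a contribution of at least $(\min g/\varepsilon)\log(1/v)$ up to constants, and this positive contribution dominates. So $I(v)\to+\infty$, and continuity of $I$ in $v$ yields a zero.

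The main obstacle is Step 2: getting the branch bookkeeping and orientation signs exactly right, so that (H2) really yields a single sign change of $\rho$. The $C^1$-only asymptotics in Step 4 is a secondary delicate point.
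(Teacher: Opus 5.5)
Your proof is correct, but it reaches uniqueness by a different mechanism than the paper.

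First, a bookkeeping remark on Step~1. Since $g$ is odd and $|F'|$ is even, $g/|F'|$ is odd, not even. So the branch $s_1\in(-a,0)$ contributes $+\frac{g}{|F'|}(s_1)<0$ to $\rho$. Your extra minus sign and the wrong parity claim cancel, and the formula $\rho(u)=\frac{g}{F'}(s_3)-\frac{g}{|F'|}(t)$ that you use in Step~2 is right.

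Written in the variable $x=s_1$, this density on $(0,F(c))$ is exactly the function $\Psi(x)$ of Lemma~\ref{lem: monotonicity of I}. So your one-sign-change property of $\rho$ is the same consequence of (H2) that the paper extracts there. This includes the possibility that $\rho$ vanishes on an interval, which your Step~3 tolerates by taking $u^*$ anywhere in that interval.

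The routes diverge afterwards.
\begin{itemize}
\item The paper fixes a zero $v_0$ and constructs the involution $\sigma_I$ from $I(v_0,x,\sigma_I(x))=0$. It proves $F(\sigma_I(x))<F(x)$ and derives the pointwise inequality of Theorem~\ref{thm:F>sigma'F(sigma)}, whose integral over $(-a,0)$ has the sign of $v_0-v$.
\item You instead work with the pushforward density and the ratio $K_v/K_{v_0}=(w+v_0)/(w+v)$, where $w=F(c)-u$. This ratio is strictly decreasing in $u$ when $v>v_0$ and strictly increasing when $v<v_0$. After subtracting $r(u^*)$, the integrand has constant sign. It does not vanish on $(F(a),0)$, because $\rho>0$ there and $u^*\ge 0$; you should say this explicitly to get the strict sign of $I(v)$.
\end{itemize}

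Your argument is shorter and avoids constructing $\sigma_I$ and dealing with its regularity. The paper's longer route pays off elsewhere. The pointwise comparison $\mathcal F(x,v)\gtrless\sigma_I'(x)\mathcal F(\sigma_I(x),v)$ is exactly what drives the orbit comparison in the proof of Theorem~\ref{thm: main theorem 1}, so for the paper uniqueness comes as a by-product. Your Step~4 is the same as the paper's existence argument.
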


This zero helps us establish a criterion for  \eqref{ineq:y_left>y_right}.
We will achieve this by bounding ${Y}\left(0;-a, F( c )\right)$ and 
${Y}\left(0; c , F( c )\right).$

\begin{thm}\label{thm: main theorem 1}
    Assume that (H1) and (H2) hold for system \eqref{sys: generalized Lienard}. 
    Let $v=v_0$ be the positive zero of $I(v,-a, c ).$ 
    If  
    \begin{align}\label{ineq: bound of y-intercepts of orbits}
        \min\left\{{Y}\left(0;-a,F( c )\right),
        {Y}\left(0; c ,F( c )\right)\right\}
        \le v_0 + F( c ),
    \end{align}
    then \eqref{ineq:y_left>y_right} holds.
\end{thm}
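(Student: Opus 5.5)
We argue by contradiction. Write $y_3(x):=Y(x;-a,F(c))$ on $[-a,0]$ and $y_2(x):=Y(x;c,F(c))$ on $[0,c]$, and set
\[
V_3:=y_3(0)-F(c),\qquad V_2:=y_2(0)-F(c).
\]
Both quantities are positive because $y_3$ increases on $(-a,0)$ and $y_2$ decreases on $(0,c)$. Then \eqref{ineq:y_left>y_right} reads $V_3\ge V_2$. Along these arcs we have $dy/dx=-g(x)/(y-F(x))$, so integrating gives the exact identities
\[
V_3=\int_{-a}^{0}\frac{-g(s)}{y_3(s)-F(s)}\,\mathrm ds,\qquad
V_2=\int_{0}^{c}\frac{g(s)}{y_2(s)-F(s)}\,\mathrm ds .
\]
The auxiliary integral \eqref{def: core integral} is built so that freezing the orbit at the level $y\equiv F(c)+v$ turns these integrands into $g/(F-F(c)-v)$, up to sign. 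On $(-a,c)$ we have $F(s)<F(c)+v$, so the denominators are negative. Hence
\[
\varphi(v):=I(v,-a,0)>0,\qquad \psi(v):=-I(v,0,c)>0,
\]
both are strictly decreasing in $v$, and $I(v,-a,c)=\varphi(v)-\psi(v)$.

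\textbf{Step 1 (sign of $I$).} As $v\to0^+$, the integrand of $I(v,0,c)$ becomes singular at $s=c$ because $F(c)-F(c)=0$, and the singularity is nonintegrable since $F'(c)>0$ and $g(c)>0$. So $\psi(v)\to+\infty$ while $\varphi$ stays bounded. Together with Theorem \ref{thm: uniqueness of v0}, this gives
\[
\varphi(v)<\psi(v)\ \text{for } 0<v<v_0,\qquad \varphi(v)>\psi(v)\ \text{for } v>v_0 .
\]

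\textbf{Step 2 (comparison estimates).} Since $F(c)\le y_j\le F(c)+V_j$ on each arc, the identities yield
\[
V_3\ge\varphi(V_3),\qquad V_2\ge \psi(V_2).
\]
A single constant level is too crude to produce the reverse inequalities. Instead I would compare the two arcs directly through a common parametrisation by the height $w=y-F(c)\in(0,\min\{V_2,V_3\}]$. The plan is to invert both graphs, $x_3(w)$ on $(-a,0)$ and $x_2(w)$ on $(0,c)$, and study $dx/dw=-(y-F(x))/g(x)$. The hypothesis (H2), monotonicity of $F'/g$ on $(0,c)$, is exactly what should let the $c$-side orbit be dominated by the flat level $w$. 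I would first transfer (H2) to $(-a,0)$ by oddness.

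\textbf{Step 3 (contradiction).} Suppose $V_3<V_2$. Then the hypothesis \eqref{ineq: bound of y-intercepts of orbits} gives $V_3\le v_0$. Here the two arcs must be compared up to the common height $V_3$. I would show that if the left arc reaches $x=0$ at height $V_3\le v_0$, then the right arc reaches $x=0$ at a height not exceeding $V_3$. The mechanism is a Gronwall/comparison argument: the difference of the frozen-level integrals is $\varphi-\psi\le0$ on $(0,v_0]$ by Step 1, and it should control the difference of the true travel lengths. This step yields $V_2\le V_3$, contradicting the assumption. The case $V_2\le V_3$ is immediate.

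\textbf{Main obstacle.} Step 3 is the crux: converting the sign information on the frozen integral $I(v,-a,c)$ into an honest comparison between two genuinely curved orbits. I would first try to write
\[
V_3-V_2=\bigl(I\text{-term at level }V\bigr)+\bigl(\text{error terms}\bigr),
\]
with each error term of a definite sign. The sign of each error term would come from $y_j\le F(c)+V_j$ together with the monotonicity of the integrands in $v$. If that fails, I would use (H2) to produce a monotone comparison function in $w$.
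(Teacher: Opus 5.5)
\textbf{There is a genuine gap.} Step~1 has the wrong sign, and Step~3, which carries the whole theorem, is not an argument.

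\textbf{Step~1 sign error.} Your Step~1 has the sign backwards, and this is not cosmetic. Since $c$ is defined by $F(c)=F(-a)$, the integrand of $\varphi(v)=I(v,-a,0)$ equals $g(a)/v$ at $s=-a$, so $\varphi$ is unbounded as $v\to0^+$. Because $F'(-a)=0$ while $F'(c)>0$, the denominator vanishes to higher order at $s=-a$ than at $s=c$. Hence $\varphi$ blows up faster than the logarithmic growth of $\psi$. For large $v$ one also has $I(v,-a,c)\sim -v^{-1}\int_a^c g(s)\,\mathrm ds<0$, which your version contradicts. Combining both ends with Theorem~\ref{thm: uniqueness of v0}, the truth is $\varphi>\psi$ on $(0,v_0)$ and $\varphi<\psi$ on $(v_0,+\infty)$. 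With your sign, the heuristic behind Step~3 (at each frozen level the right arc gains more height) would point to $V_2\ge V_3$, the opposite of what you want.

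\textbf{Step~3 is missing its key idea.} The sign of the single number $\varphi(v)-\psi(v)$ at each frozen level cannot control two curved arcs. These arcs sweep through a range of heights over the disjoint intervals $(-a,0)$ and $(0,c)$. A Gronwall-type argument needs a pointwise comparison, and you never fix a correspondence between points of the two intervals.

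The missing idea is a $v$-independent pairing. The paper uses the involution $u=\sigma_I(x)$ defined by $I(v_0,x,\sigma_I(x))=0$. Then $\sigma_I(-a)=c$, $\sigma_I(0)=0$, and $\mathcal F(x,v_0)=\sigma_I'(x)\mathcal F(\sigma_I(x),v_0)$, where $\mathcal F(x,v)=g(x)/(F(x)-F(c)-v)$. Eliminating $\sigma_I'$ gives
\[
\mathcal F(x,v)-\sigma_I'(x)\mathcal F(\sigma_I(x),v)
=\frac{(v-v_0)\bigl(F(\sigma_I(x))-F(x)\bigr)g(x)}{\bigl(F(x)-F(c)-v\bigr)\bigl(F(x)-F(c)-v_0\bigr)\bigl(F(\sigma_I(x))-F(c)-v\bigr)}.
\]
This has the sign of $v_0-v$ for every $x\in(-a,0)$ as soon as $F(\sigma_I(x))<F(x)$ there.

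That last inequality is where (H2) is genuinely used. It makes $x\mapsto I(v,x,\sigma_F(x))$ first decrease and then increase on $(-a,0)$ (Lemma~\ref{lem: monotonicity of I}). Therefore $I(v_0,x,\sigma_F(x))<0=I(v_0,x,\sigma_I(x))$, whence $\sigma_I(x)<\sigma_F(x)$ and $F(\sigma_I(x))<F(x)$.

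\textbf{How the argument then closes.} Rewrite the right arc over $(-a,0)$ as $y=Y(\sigma_I(x);c,F(c))$. It starts at $(-a,F(c))$ together with the left arc and ends over $x=0$ at height $F(c)+V_2$. Its direction field has strictly smaller slope than that of the original system at every point of $(-a,0)\times(F(c),F(c)+v_0)$. Under your contradiction hypothesis the left arc stays in this strip. A first-crossing comparison then gives $V_3\ge V_2$; near the degenerate starting point, where both arcs are vertical, it should be done in your height chart $x(w)$. Without $\sigma_I$, or some other pairing whose slope difference is sign-definite for all $v<v_0$ at once, the one-sided bounds of Step~2 and the frozen-level comparison of Step~1 do not combine into $V_3\ge V_2$.
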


From previous analysis, we immediately obtain the following.
\begin{cor}\label{cor: amplitude > c}
    Assume that (H1) and (H2) hold for system \eqref{sys: generalized Lienard}.
    Let $v=v_0$ be the positive zero of $I(v,-a, c ).$ 
    If \eqref{ineq: bound of y-intercepts of orbits} holds,
    then we have $A_{\text{Li\'e}}> c .$
\end{cor}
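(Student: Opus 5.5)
The plan is to combine Theorem~\ref{thm: main theorem 1} with the reduction chain set up earlier in the introduction. No new estimates should be needed; only the bookkeeping of strict versus non-strict inequalities requires care.

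\emph{Step 1.} Since (H1), (H2) and \eqref{ineq: bound of y-intercepts of orbits} hold, Theorem~\ref{thm: main theorem 1} gives \eqref{ineq:y_left>y_right}, namely
\[
Y\left(0;-a,F(c)\right)\ge Y\left(0;c,F(c)\right).
\]

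\emph{Step 2.} Pass from the starting point $(-a,F(c))$ to $(-c,-F(c))$ through the strict inequality
\[
Y\left(0;-a,F(c)\right)<Y\left(0;-c,-F(c)\right),
\]
which the introduction asserts. I would justify it as follows. The orbit $y_1$ through $(-c,-F(c))$ starts on $L_F$, since $F(-c)=-F(c)$. On $x<0$ it lies above $L_F$ and is strictly increasing. For $x\in(-c,-a]$ we have $F(x)<F(-a)=-F(c)$, with the last equality coming from the definition of $c$ together with oddness. I would use this to show that $y_1$ stays above $L_F$ on $[-c,-a]$, and that $y_1(-a)>y_1(-c)=-F(c)$.

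I also need $y_1(-a)$ compared with $F(c)$, and here some care is required. For orbits above $L_F$ with $x<0$, the flow is rightward and upward. Two distinct orbit graphs cannot cross, so the orbit through $(-a,F(c))$ and $y_1$ are ordered on their common $x$-interval. Tracing both backward to $x=-c$, the graph $y_3$ at $x=-c$ lies at some height above $L_F$. I would instead compare at $x=-a$: if $y_1(-a)\le F(c)$, then $y_3$ would lie above $y_1$ throughout. This contradicts the figure's configuration, so the ordering at $x=0$ must really be read off by uniqueness of solutions and the geometry in Fig.~\ref{fig:lienard_phase_portrait}. Since the paper states this inequality explicitly as part of its reduction ("because of"), I will take it as established in the preceding analysis.

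\emph{Step 3.} Chaining Steps 1 and 2 gives
\[
Y\left(0;-c,-F(c)\right)>Y\left(0;c,F(c)\right).
\]
Now apply the stated equivalence with $x_0=c\ge b$: $A_{\text{Li\'e}}>x_0$ if and only if $Y(0;-x_0,-F(x_0))>Y(0;x_0,F(x_0))$. This yields $A_{\text{Li\'e}}>c$.

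For completeness I would recall why the equivalence holds. By central symmetry, the orbit through $(c,F(c))$, continued backward to $x=0$, is the reflection of the orbit through $(-c,-F(c))$ continued forward below $L_F$. The strict inequality therefore means that the half-return map from the $y$-axis increases the intercept at this point. By uniqueness and global attraction of the limit cycle, the cycle lies outside the orbit through $(c,F(c))$, so its maximum of $|x|$ exceeds $c$.

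\emph{Main obstacle.} The only delicate point is the strictness in Step 2, and whether the $x_0=c\ge b$ equivalence can be used as stated. Both are already asserted earlier in the paper, so the proof reduces to citing them.
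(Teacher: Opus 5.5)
Your route is the paper's. The corollary is presented there as an immediate consequence of Theorem~\ref{thm: main theorem 1} together with the reduction in the introduction. That reduction consists of the strict inequality $Y(0;-a,F(c))<Y(0;-c,-F(c))$, stated with a pointer to Fig.~\ref{fig:lienard_phase_portrait}, and the equivalence for $x_0=c\ge b$; this is exactly your Steps~1--3.

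Citing Step~2 is therefore legitimate, but your attempted justification of it rests on a false identity. By the definition of $c$ and oddness, $F(c)=F(-a)=-F(a)$; it is not true that $F(-a)=-F(c)$ (it is $F(-c)$ that equals $-F(c)$). Hence $(-a,F(c))$ is the left local maximum point of $L_F$, and $y_3=Y(\cdot\,;-a,F(c))$ starts \emph{on} $L_F$ at $x=-a$. It is not defined on $[-c,-a)$, so ``tracing $y_3$ back to $x=-c$'' is meaningless, and no appeal to the figure is needed.

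With the correct identity the argument is short:
\begin{itemize}
\item $y_1$ leaves $(-c,-F(c))$ into $\{y>F(x)\}$ and cannot return to $L_F$ while $x<0$, because at a point with $y=F(x)$ one has $\frac{\mathrm d}{\mathrm dt}\bigl(y-F(x)\bigr)=-g(x)>0$. Thus $y_1(-a)>F(-a)=F(c)=y_3(-a)$.
\item Both $y_1$ and $y_3$ solve $\mathrm dy/\mathrm dx=g(x)/(F(x)-y)$, whose right-hand side is $C^1$ above $L_F$. So they cannot meet on $(-a,0]$: a meeting would force $y_1\equiv y_3$ there and, letting $x\to-a^+$, $y_1(-a)=F(-a)$.
\item Therefore $y_1(0)>y_3(0)$.
\end{itemize}

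There is also a smaller slip in your recollection of the equivalence. The central symmetry preserves the direction of time. So the reflection of the arc of the orbit through $(c,F(c))$ between $x=0$ and $x=c$ is the \emph{backward} continuation, below $L_F$, of the orbit through $(-c,-F(c))$. Forward from $(-c,-F(c))$ the orbit goes above $L_F$, since $\dot y=g(c)>0$ there. This does not affect your conclusion, since you take the equivalence from the paper.
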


Inequality \eqref{ineq: bound of y-intercepts of orbits} is particularly useful in the relaxation oscillation regime.
Consider the Li\'enard system with a parameter:
\begin{align}\label{sys: generalized Lienard with mu}
    \dot x = y - \mu F(x),
    \quad 
    \dot y = -g(x),\quad \mu>0,
\end{align}
where $F(x)$ and $g(x)$ still satisfy (H1) and (H2).
% RECHECK 2026-09-09: scaled coordinates and a direct proof of the intercept limits.
To describe the geometry of the relaxation limit, introduce
\[
    z=\frac{y}{\mu},\qquad \tau=\mu t,\qquad \varepsilon=\mu^{-2}.
\]
Then system \eqref{sys: generalized Lienard with mu} becomes
\[
    \frac{\mathrm dx}{\mathrm d\tau}=z-F(x),\qquad
    \frac{\mathrm dz}{\mathrm d\tau}=-\varepsilon g(x).
\]
The corresponding singular loop in the $(x,z)$-plane consists of the horizontal segments
$z=F(c)$ for $x\in[-a,c]$ and $z=-F(c)$ for $x\in[-c,a]$,
together with the branches $z=F(x)$ for $x\in[-c,-a]\cup[a,c]$.
In Fig. \ref{fig:Singular limit cycle}, the vertical coordinate denotes this scaled variable $z=y/\mu$.
This geometry motivates the criterion below, whose proof uses direct orbit estimates under the stated hypotheses.
Writing $Y_\mu$ for the orbit graphs of \eqref{sys: generalized Lienard with mu}, we have
\begin{align*}
    \lim_{\mu\rightarrow+\infty} \left(
    Y_\mu\left(0;-a,\mu F( c )\right)
    -\mu F(c) 
    \right)
    =\lim_{\mu\rightarrow+\infty} \left(
    Y_\mu\left(0;c,\mu F( c )\right)
    -\mu F(c) 
    \right)=0.
\end{align*}
Indeed, for any $\varepsilon>0$ and $x_*\in\{-a,c\}$, comparison with the orbit through the point $(x_*,\mu F(c)+\varepsilon)$ gives
\[
    0<Y_\mu(0;x_*,\mu F(c))-\mu F(c)
    <\varepsilon+\int_{\min\{0,x_*\}}^{\max\{0,x_*\}}
    \frac{|g(s)|}{\mu(F(c)-F(s))+\varepsilon}\,\mathrm ds.
\]
For fixed $\varepsilon$, the integral tends to zero when $\mu\rightarrow+\infty$; letting $\varepsilon\to0^+$ proves the limits.
Thus, the scaled version of \eqref{ineq: bound of y-intercepts of orbits} holds for all sufficiently large $\mu$, since its threshold is $\mu(F(c)+v_0)$.
Below, we write $Y$ for $Y_\mu$ when the dependence on $\mu$
is clear from the context.

\begin{figure}[htbp]
    \centering
    % [width=0.75\textwidth] 控制图片宽度为正文宽度的 75%，可根据页面效果微调
    \includegraphics[width=1\textwidth]{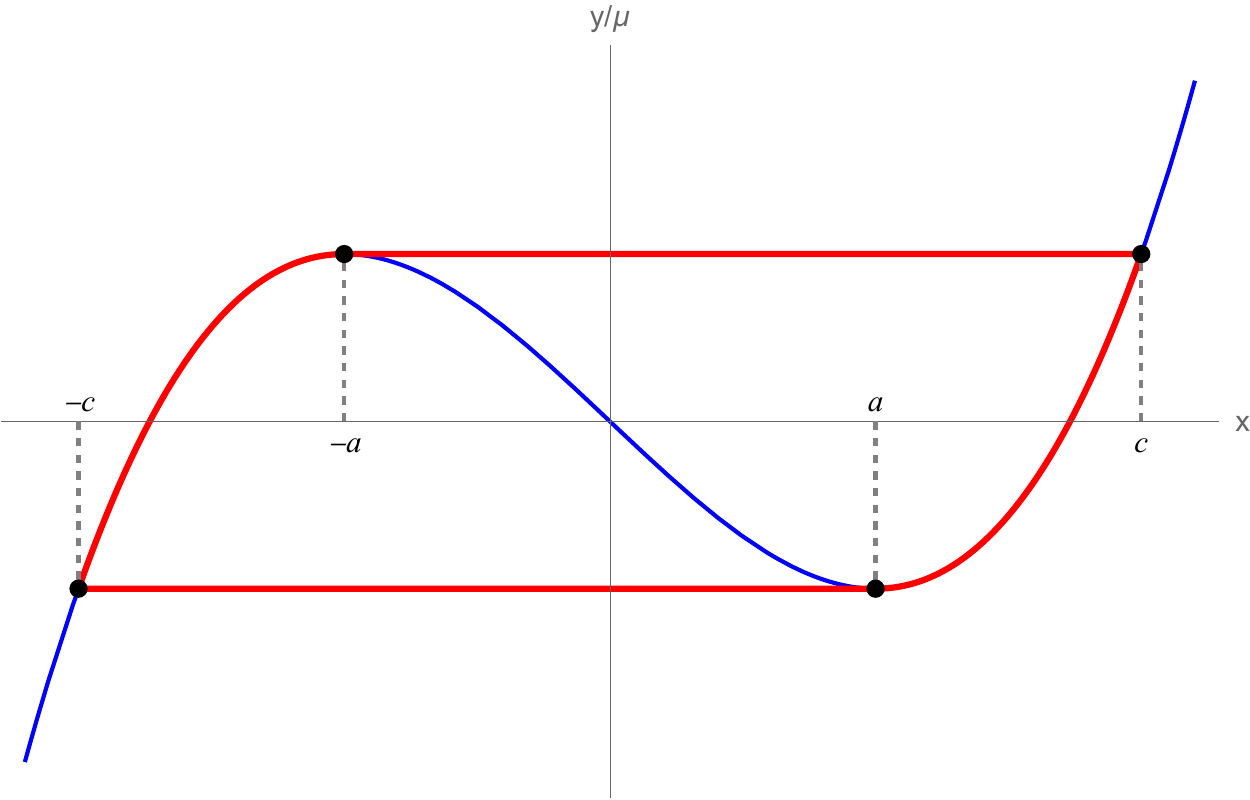}
    
    % 图题说明：支持包含 LaTeX 数学公式
    \caption{The singular loop of system \eqref{sys: generalized Lienard with mu} in the scaled $(x,z)$-plane, where $z=y/\mu$, in the limit $\mu\rightarrow+\infty$.}
    
    % 交叉引用标签（必须放在 \caption 之后）
    \label{fig:Singular limit cycle}
\end{figure}
    
An explicit sufficient lower bound on $\mu$ follows by estimating the orbit intercepts under one additional hypothesis.
\begin{itemize}
    \item[(H3)] $F(x)+F(a)\le -{L} \cdot(x+a)^{l}$
    on $[-a,0]$
    with  some $l>1$ and ${L} >0.$
\end{itemize}

\begin{thm}\label{thm: criterion of mu for A(mu)>b}
    Assume that (H1), (H2) and (H3) hold.
    Let $v=v_0$ be the positive zero of $I(v,-a, c ).$ 
    The amplitude of the limit cycle of system \eqref{sys: generalized Lienard with mu} is greater than $c,$ 
    provided that
    \begin{align*}
        \kappa \left(\frac{1}{\mu}\right)^{\frac{2l}{2l-1}}  \le v_0,
    \end{align*}
    where
    \begin{align}
        \kappa:=
        \left(2-\frac{1}{l}\right)
            \left(\frac{l}{l-1}\right)^\frac{l-1}{2l-1}
            \left( \max_{0\le x \le a}\{g(x)\}\cdot \int_{0}^{+\infty}{\frac{\mathrm d s}
            { s^{l}+1}} \right)^{\frac{l}{2l-1}}
             \left(\frac{1}{ {L} }\right)^{\frac{1}{2l-1}}.
        \label{def: kappa}
    \end{align}
\end{thm}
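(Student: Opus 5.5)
The plan is to apply Corollary~\ref{cor: amplitude > c} to system \eqref{sys: generalized Lienard with mu}, which is system \eqref{sys: generalized Lienard} with $F$ replaced by $\mu F$. Note that $\mu F$ satisfies (H1) with the same $a,b,c$, and satisfies (H2) because $\mu F'/g$ is still nondecreasing. First I would identify the zero of the corresponding integral. Writing $I_\mu(v,x,u)=\int_x^u g(s)/(\mu F(s)-\mu F(c)-v)\,\mathrm ds$, one has $I_\mu(v,-a,c)=\mu^{-1}I(v/\mu,-a,c)$. By Theorem~\ref{thm: uniqueness of v0}, the unique positive zero of $I_\mu(\cdot,-a,c)$ is therefore $\mu v_0$.

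Condition \eqref{ineq: bound of y-intercepts of orbits} then reads
\[
\min\{Y_\mu(0;-a,\mu F(c)),\,Y_\mu(0;c,\mu F(c))\}\le \mu v_0+\mu F(c).
\]
It suffices to bound the left-hand orbit alone: I will show $w(0)\le \kappa\mu^{-1/(2l-1)}$, where $w(x):=Y_\mu(x;-a,\mu F(c))-\mu F(c)$ for $x\in[-a,0]$. Since $\kappa\mu^{-2l/(2l-1)}\le v_0$ is equivalent to $\kappa\mu^{-1/(2l-1)}\le\mu v_0$, this gives the conclusion.

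To estimate $w$, set $s=x+a\in[0,a]$. Along the orbit above $L_F$,
\[
\frac{\mathrm dw}{\mathrm ds}=\frac{-g(x)}{w+\mu(F(c)-F(x))}.
\]
Since $F(c)=F(-a)=-F(a)$, hypothesis (H3) gives $F(c)-F(x)\ge L s^l$ on $[-a,0]$. By oddness, $0\le -g(x)\le G:=\max_{0\le x\le a}g(x)$. Hence $w$ is nondecreasing, $w(-a)=0$, and
\[
0\le \frac{\mathrm dw}{\mathrm ds}\le \frac{G}{w+\mu L s^l}.
\]
The key trick is a splitting at a level $w_*>0$ to be optimized. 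If $w(0)\le w_*$ we are done. Otherwise, let $s_1$ be the first point with $w(s_1)=w_*$. For $s\ge s_1$ monotonicity gives $w\ge w_*$, so
\[
w(0)\le w_*+\int_{s_1}^{a}\frac{G\,\mathrm ds}{w_*+\mu L s^l}\le w_*+\int_0^\infty\frac{G\,\mathrm ds}{w_*+\mu L s^l}=w_*+K\,w_*^{-(l-1)/l}.
\]
Here the substitution $s=(w_*/(\mu L))^{1/l}\sigma$ gives $K=G\,C_l\,(\mu L)^{-1/l}$ with $C_l=\int_0^\infty \mathrm d\sigma/(\sigma^l+1)$, which is finite because $l>1$.

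It remains to minimize $\varphi(w_*)=w_*+Kw_*^{-(l-1)/l}$ over $w_*>0$. The minimizer is $w_*=\bigl(\tfrac{l-1}{l}K\bigr)^{l/(2l-1)}$. Substituting back, the minimum is a constant depending only on $l$ times $K^{l/(2l-1)}$. Since $K^{l/(2l-1)}=(GC_l)^{l/(2l-1)}L^{-1/(2l-1)}\mu^{-1/(2l-1)}$, this reproduces the structure of $\kappa$ in \eqref{def: kappa} times $\mu^{-1/(2l-1)}$. Corollary~\ref{cor: amplitude > c} then yields $A_{\text{Li\'e}}>c$.

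The main obstacle is the bookkeeping in this last step. I must check that the optimized constant is at most $\kappa$, since the powers of $l/(l-1)$ must be reconciled with the exact form of \eqref{def: kappa}; if needed I would choose $w_*$ suboptimally to match it exactly. I must also justify that the orbit from $(-a,\mu F(c))$ stays above $L_F$ on $[-a,0]$, so that the graph description and the differential inequality are valid. This follows from $w\ge0>\mu(F(x)-F(c))$ there.
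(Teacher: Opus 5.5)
Your proposal is correct and follows essentially the same route as the paper. The paper gets the same bound $\varepsilon+K\varepsilon^{-(l-1)/l}$ by comparing with the orbit through $(-a,\mu F(c)+\varepsilon)$ instead of your first-hitting split at level $w_*$, then applies Corollary~\ref{cor: amplitude > c} with the $\mu$-scaled zero $\mu v_0$ as you do; your bookkeeping worry is unfounded, since the exact minimum is $\frac{2l-1}{l-1}\bigl(\frac{l-1}{l}\bigr)^{l/(2l-1)}K^{l/(2l-1)}$ and $\frac{2l-1}{l-1}\bigl(\frac{l-1}{l}\bigr)^{l/(2l-1)}=\bigl(2-\frac1l\bigr)\bigl(\frac{l}{l-1}\bigr)^{(l-1)/(2l-1)}$, so it equals $\kappa\mu^{-1/(2l-1)}$ exactly and no suboptimal choice of $w_*$ is needed.
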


For the van der Pol equation, we have $F(x)=x^3/3-x$ and $g(x)=x$.
Assumptions (H1)--(H3) hold with $a=1$, $b=\sqrt3$, and $c=2$.
Theorem~\ref{thm: criterion of mu for A(mu)>b} therefore gives the
step for the relaxation oscillation regime in the proof of
Theorem~\ref{thm: A of vdP > 2}.

The paper is organized as follows.
Section~\ref{sec:symmetric} develops the auxiliary integral and orbit
comparisons and proves the general amplitude criteria for symmetric
Li\'enard systems. Section~\ref{sec:vdp} proves
Theorem~\ref{thm: A of vdP > 2} by treating the relaxation oscillation,
weakly nonlinear, and intermediate regimes separately. Appendix~\ref{app:certificates}
provides the complete Mathematica programs and exact computational
certificates used in these proofs.

\section{Results for symmetric Li\'enard systems}\label{sec:symmetric}

In this section, we always assume that (H1) holds.
Recall that $b$ is the unique positive zero of $F(x)$ and $ c $ is the unique positive root of $F(x)=F(-a),$
where $a$ is given in (H1).

First, we study the existence of positive zeros of $I(v,-a, c )$ with respect to $v,$
where $I(v,x,u)$ is defined in \eqref{def: core integral}.
It is sufficient to consider the signs of $I(v,-a, c )$ near $0^+$ and $+\infty,$ respectively.
Since $F$ and $g$ are bounded on $[-a,c]$, we have
\begin{align*}
    I(v,-a, c )\sim -\frac{1}{v}\int_{-a}^{ c }{g(s)\mathrm d s}
    \quad ( v\rightarrow+\infty).
\end{align*}
Moreover, oddness and positivity of $g$ on $(0,+\infty)$ give
$\int_{-a}^c g(s)\,\mathrm ds=\int_a^c g(s)\,\mathrm ds>0$; hence
\begin{align}\label{ineq: I(v,-a, c )<0}
    I(v,-a, c )<0
    \quad\text{for sufficiently large }\; v.
\end{align}
On the other hand, the sign of $I(v,-a, c )$ as $v\rightarrow0^+$ is determined by the sign of the integrand as $s\rightarrow(-a)^+,$
because when $v=0$ the singularity of the integrand at $s=-a$ is stronger than that at $s= c .$
To be specific, by (H1) we have
\begin{align*}
    F(s)+ F(a)=o(s+a)\quad (s\rightarrow(-a)^+)
    \quad
    \\
    \text{and}
    \quad
    F(s)-F( c ) \sim  F'( c )(s- c ) \quad (s\rightarrow  c ),
\end{align*}
and hence
\begin{align*}
    \lim_{v\rightarrow 0^+}
    \frac{I(v,-a,0)}{-\ln v} = +\infty
    \quad\text{and}\quad
    I(v,0, c ) \sim \frac{g( c )}{F'( c )}\ln v \quad (v\rightarrow 0^+),
\end{align*}
respectively.
This implies that
\begin{align}\label{ineq:I(v,-a, c )>0}
    I(v,-a, c )=I(v,-a,0)+I(v,0, c ) >0 
    \quad\text{for $v$ sufficiently close to } 0^+.
\end{align}
Now by \eqref{ineq: I(v,-a, c )<0} and \eqref{ineq:I(v,-a, c )>0},
$I(v,-a, c )$ has at least one positive zero.

Next, we consider the relation between $x$ and $u$ such that $I(v,x,u)=0.$
Recall that $\bar\zeta(v)$ denotes the unique real root of the equation $F(x)-F(c)=v>0.$
When $ x \le 0\le u<  \bar\zeta(v),$
it is not hard to see that
$I(v,x,u)$ is decreasing with respect to both $x$ and $u.$
Together with the fact that 
\begin{align*}
    I(v,x,0)>0>\lim_{u\rightarrow \left(\bar\zeta(v)\right)^-}I(v,x, u ) =-\infty
    \quad\text{for}\quad x<0,
\end{align*}
for any $x\le 0$ there is a unique $u\in\left[0, \bar\zeta(v) \right)$ satisfying $I(v,x,u)=0.$
On the other hand, denote 
\begin{align*}
    \tilde{\zeta}(v):=\sup\left\{u\in(0,\bar\zeta(v))\;|\;
    I(v,x,u)=0\text{ for some }x<0\right\}.
\end{align*}
% RECHECK 2026-09-09: select the nontrivial branch before defining the involution.
We have $\tilde{\zeta}(v)\le\bar\zeta(v)$.
For $x<0$, select the unique positive solution $u$ of $I(v,x,u)=0$.
This strictly decreasing correspondence maps $(-\infty,0)$ onto $(0,\tilde\zeta(v))$.
Extending it by its inverse on $(0,\tilde\zeta(v))$ and assigning $0$ to $0$, 
$I(v,x,u)=0$ defines a continuous involution between $x$ and $u$ on $(-\infty,\tilde\zeta(v))$.
In particular, letting $v=v_0$ be any positive zero of $I(v,-a,c)$,
we denote this involution mapping with $v=v_0$ by
\begin{align*}
    u=\sigma_I(x),\quad x< \tilde{\zeta}(v_0).
\end{align*}
We emphasize some basic properties of $\sigma_I(x)$ as follows.
\begin{itemize}
    \item 
    $I(v_0,x,\sigma_I(x))\equiv0.$

    \item 
    $\sigma_I(x)$ is strictly decreasing .

    \item 
    $\sigma_I(0)=0$ and $\tilde{\zeta}(v_0)>c.$
\end{itemize}

After introducing the following lemma,
we will give our key result (Theorem \ref{thm:F>sigma'F(sigma)}) for Theorem \ref{thm: uniqueness of v0} and Theorem \ref{thm: main theorem 1}.

% RECHECK 2026-09-09: allow a zero interval without strengthening (H2).
\begin{lem}\label{lem: monotonicity of I}
    Assume that (H1) and (H2) hold. 
    For $x\in[-a,0]$, let $\sigma_F(x)\in[b,c]$ be the unique point satisfying $F(\sigma_F(x))=F(x)$.
    For each fixed $v>0$, the function $x\mapsto I(v,x,\sigma_F(x))$ has a possibly degenerate interval of minima contained in $(-a,0)$.
    It is strictly decreasing before this interval and strictly increasing after it.
    \begin{proof}
        By (H1), it is not hard to see that $\sigma_F(x)$ is strictly decreasing and $\sigma_F(0)>a.$
        By direct computation, for $x\in(-a,0)$ we have
        \begin{align*}
            \frac{\mathrm d}{\mathrm d x}I\left(v,x,\sigma_F(x)\right)
            =&\frac{\sigma_F'(x)\cdot g\left(\sigma_F(x)\right)}{F\left(\sigma_F(x)\right)-F( c )-v}
            -\frac{ g(x)}{F(x)-F( c )-v}
            \\
            =&\frac{F'(x)}{F'\left(\sigma_F(x)\right)}\frac{g\left(\sigma_F(x)\right)}{F(x)-F( c )-v}
            -\frac{ g(x)}{F(x)-F( c )-v}
            \\
            =&\left(\frac{g\left(\sigma_F(x)\right)}{F'\left(\sigma_F(x)\right)}-\frac{g(x)}{F'(x)}\right)
            \frac{F'(x)}{F(x)-F( c )-v}.
        \end{align*}
        By (H1), $F'(x)<0$ for $x\in(-a,a)$, and $F(x)-F(c)-v<F(x)-F(c)\le0$ for $x\in[-a,c]$.
        Therefore, the sign of $\mathrm d I\left(v,x,\sigma_F(x)\right)/\mathrm d x$
        is determined by the sign of 
        \[
        \Psi(x):=\frac{g\left(\sigma_F(x)\right)}{F'\left(\sigma_F(x)\right)}-\frac{g(x)}{F'(x)}.
        \]
        On the other hand, 
        by (H1), $g(x)/F'(x)$ is an odd function. 
        Then by (H2) and the fact that $\sigma_F(x)$ is decreasing, 
        $\Psi(x)$ is nondecreasing for $x\in(-a,0)$.
        Its endpoint values satisfy
        \begin{align*}
            \lim_{x\rightarrow (-a)^+}\Psi(x)
        =\frac{g( c )}{F'( c )}-\lim_{x\rightarrow (-a)^+}\frac{g(-a)}{F'(x)}=-\infty
        \quad
        \text{and}\quad \Psi(0)
        =\frac{g(\sigma_F(0))}{F'(\sigma_F(0))}>0,
        \end{align*}
        Thus, the zero set of $\Psi$ is a nonempty closed interval $[x_-,x_+]\subset(-a,0)$, possibly with $x_-=x_+$,
        and  $I(v,x,\sigma_F(x))$ is strictly decreasing on $(-a,x_-)$, constant on $[x_-,x_+]$, and strictly increasing on $(x_+,0)$.
    \end{proof}
\end{lem}

For convenience, let
\begin{align}\label{def: mathcal F}
    \mathcal{F}(x,v):=\frac{g(x)}{F(x)-F( c )-v}.
\end{align}
Recall that 
$I(v,x,u)=\int_x^u {\mathcal{F}(s,v)\mathrm d s}.$

\begin{thm}\label{thm:F>sigma'F(sigma)}
    Assume that (H1) and (H2) hold.
    For any $(x,v)\in(-a,0)\times(0,v_0),$
    we have 
    \begin{align*}
        \mathcal{F}(x,v)>\sigma'_I(x) \mathcal{F}\left(\sigma_I(x),v\right),
    \end{align*}
    while for any $(x,v)\in(-a,0)\times(v_0,+\infty),$
    we have
    \begin{align*}
        \mathcal{F}(x,v)<\sigma'_I(x) \mathcal{F}\left(\sigma_I(x),v\right).
    \end{align*}
    \begin{proof}
        We first claim that $F(\sigma_I(x))< F(x)$
        on $(-a,0).$
        Let 
        \[
        u=\sigma_F(x),\quad x\in[-a,0]
        \]
        be the map defined in Lemma \ref{lem: monotonicity of I}.
        Since $I(v_0,-a,c)=0$ and $I(v_0,0,b)<0$, Lemma \ref{lem: monotonicity of I} gives, for $x\in(-a,0)$,
        \begin{align*}
        I\left(v_0,x,\sigma_F(x)\right)
        &<\max\left\{I\left(v_0,-a,\sigma_F(-a)\right),I\left(v_0,0,\sigma_F(0)\right)\right\}
        \\
        &=0=I\left(v_0,x,\sigma_I(x)\right).
        \end{align*}
        Since $I(v_0,x,u)$ is decreasing with respect to $u,$
        it implies that for $x\in(-a,0)$ we have
        \[
        0<\sigma_I(x)<\sigma_F(x)< c .
        \]
        Recall that by (H1), $F(x)$ is 
        strictly decreasing on $(-a,a)$ and
        strictly increasing elsewhere, and $\sigma_F(0)>a.$
        Thus,
        when $0<\sigma_I(x)<\sigma_F(0),$
        we have 
        \[
        F(\sigma_I(x))<F(\sigma_F(0))=F(0)<F(x),
        \]
        while when $\sigma_I(x)\ge \sigma_F(0),$ 
        we also have
        \[
        F(\sigma_I(x))<F(\sigma_F(x))=F(x).
        \]
        Now we have shown that $F(\sigma_I(x))< F(x)$ for $x\in(-a,0).$

        From $I\left(v_0,x,\sigma_I(x)\right)\equiv0,$
        we obtain
        \begin{align*}
            0=\frac{\mathrm{d}}{\mathrm d x}I(v_0,x,\sigma_I(x))
            =\sigma_I^\prime(x) \mathcal F(\sigma_I(x),v_0)-\mathcal F(x,v_0).
        \end{align*}
        By direct computation, 
        we have
        \begin{align*}
            &\mathcal{F}\left(x,v\right)
            -\sigma'_I(x)\mathcal{F}\left(\sigma_I(x),v\right)
            \\
            =&\mathcal{F}\left(x,v\right)
            -\frac{\mathcal F(x,v_0)}{\mathcal F\left(\sigma_I(x),v_0\right)}
            \mathcal{F}\left(\sigma_I(x),v\right)
            \\
            =&\frac{g(x)}{F(x)-F( c )-v}
            -\frac{F\left(\sigma_I(x)\right)-F( c )-v_0}{F(x)-F( c )-v_0}
            \frac{g(x)}{F\left(\sigma_I(x)\right)-F( c )-v}
            \\
            =&\frac{(v-v_0)\left(F\left(\sigma_I(x)\right)-F(x)\right)g(x)}{\left(F\left(x\right)-F( c )-v\right)\left(F\left(x\right)-F( c )-v_0\right)\left(F\left(\sigma_I(x)\right)-F( c )-v\right)}.
        \end{align*}
        For $(x,v)\in(-a,0)\times(0,+\infty),$
        the expression above has the same sign as $v_0-v,$
        which completes the proof.
    \end{proof}
\end{thm}

\subsection{Proof of Theorem \ref{thm: uniqueness of v0}}

        The existence of a positive zero $v=v_0$ 
        of $I(v,-a,c)$ is given by \eqref{ineq: I(v,-a, c )<0} and \eqref{ineq:I(v,-a, c )>0}.
        On the other hand, we have
        \begin{align*}
            I(v,-a, c )=I(v,-a,0)+I(v,0, c )
            =\int_{-a}^0 \left(
            \mathcal F(x,v)-\sigma_I^\prime(x) \mathcal F(\sigma_I(x),v)
            \right) \mathrm dx.
        \end{align*}
        By Theorem \ref{thm:F>sigma'F(sigma)},
        $I(v,-a, c )$ has the same sign as $v_0-v,$
        which gives the uniqueness of $v_0.$

\subsection{Proof of Theorem \ref{thm: main theorem 1}}

    Introducing a new time $\tau$ by
    \begin{align*}
       \mathrm d t=\sigma_I'(x)\,\mathrm d\tau,
    \end{align*}
    system \eqref{sys: generalized Lienard} becomes, with dots now denoting derivatives with respect to $\tau$,
    \begin{align}\label{sys: Lienard eq after variable changes}
        \dot x = y- F\left(\sigma_I(x)\right),
        \quad 
        \dot y 
        =-g(x)\cdot \frac{F\left(\sigma_I(x)\right)-F( c )-v_0}{F(x)-F( c )-v_0}.
    \end{align}
    Since $\sigma_I'(x)<0$ on $\left(-\infty,\tilde{\zeta}(v_0)\right)\backslash\{0\}$, this time reparametrization reverses the direction of the flow.
    Although condition (H1) does not guarantee the existence of $\sigma_I'(x)$ at $x=0,$
    the vector field of \eqref{sys: Lienard eq after variable changes} is well defined and continuous for $(x,y)\in\left(-\infty,\tilde{\zeta}(v_0)\right)\times\mathbb R.$

    Consider the curves:
    \begin{align*}
        \mathcal L&:=\left\{(x,y)\;|\;y=Y\left(x;-a, F(c)\right),\,-a<x<0\right\},
        \\
        \mathcal L'&:=\left\{(x,y)\;|\;y=Y\left(x;c, F(c)\right),\,0<x<c\right\},
        \\
        \mathcal L_\sigma&:=\left\{(x,y)\;|\;y=Y\left(\sigma_I(x);c, F(c)\right),\,-a<x<0\right\}.
    \end{align*}
    We see that $\mathcal L$ and $\mathcal L'$ are segments of orbits of system \eqref{sys: generalized Lienard}.
    By the transformation $x\rightarrow \sigma_I(x),$
    $\mathcal L'$ is mapped to $\mathcal L_\sigma$
    which is a segment of orbit of system \eqref{sys: Lienard eq after variable changes}.
    Moreover, $\mathcal L$ and $\mathcal L_\sigma$ have a common endpoint $(-a,F(c)).$
    Now, we are going to show that  $\mathcal L$ lies above $\mathcal L_\sigma$ under the assumptions of the theorem.

    Notice that
    \begin{align}\label{eq: difference between two vector fields}
    \left.\frac{\mathrm dy }{\mathrm d x}\right|_{\eqref{sys: generalized Lienard}}-
    \left.\frac{\mathrm dy }{\mathrm d x}\right|_{\eqref{sys: Lienard eq after variable changes}}
    =
    \mathcal{F}\left(x,y-F( c )\right)
    -\sigma_I'(x)\mathcal{F}\left(\sigma_I(x),y-F( c )
    \right).
    \end{align}
    By Theorem \ref{thm:F>sigma'F(sigma)}, the expression in \eqref{eq: difference between two vector fields} is positive for any $(x,y)\in \mathcal D,$ where
    \begin{align*}
         \mathcal D:=(-a,0)\times\left( F(c), F(c)+  v_0 \right).
    \end{align*}
    % RECHECK 2026-09-09: establish the endpoint ordering in the regular x(y) chart.
    The apparent singularity at $(-a,F(c))$ comes from using $x$ as the independent variable.
    Both vector fields are $C^1$ near this point and satisfy $\dot y=-g(-a)>0$ there, so their local orbits can be written as regular graphs $x=x(y)$.
    Both graphs enter $x>-a$ as $y$ increases from $F(c)$.
    In this chart, the strict slope inequality in \eqref{eq: difference between two vector fields} is reversed by taking reciprocals: the $x$-slope of $\mathcal L$ is smaller than that of $\mathcal L_\sigma$ at the same point of $\mathcal D$.
    Scalar comparison from the common initial value therefore gives $x_{\mathcal L}(y)<x_{\mathcal L_\sigma}(y)$ locally, which means that $\mathcal L$ lies above $\mathcal L_\sigma$ in a neighborhood of the point $(-a,F(c))$.
    Specifically, 
    both graphs are strictly increasing in $x$, so condition \eqref{ineq: bound of y-intercepts of orbits} gives two cases:
    $\mathcal L$ or $\mathcal L_\sigma$ lies in $\mathcal D.$
    
    If $\mathcal L\subset\mathcal D,$ by \eqref{eq: difference between two vector fields} and Theorem \ref{thm:F>sigma'F(sigma)}, the vector field of \eqref{sys: Lienard eq after variable changes} crosses $\mathcal L$ from above to below.
    This crossing direction preserves the local ordering established above, so $\mathcal L$ lies above $\mathcal L_\sigma$ throughout $-a<x<0$.

    If $\mathcal L_\sigma\subset \mathcal D,$
    by \eqref{eq: difference between two vector fields} and Theorem \ref{thm:F>sigma'F(sigma)}, the vector field of \eqref{sys: generalized Lienard} crosses $\mathcal L_\sigma$ from below to above.
    The same comparison argument shows that $\mathcal L$ lies above $\mathcal L_\sigma$ throughout $-a<x<0$.
    In this case, $\mathcal L$ need not lie entirely in $\mathcal D.$
    
    In conclusion, we obtain
    \begin{align*}
        {Y}\left(0;-a, F( c )\right)
        \ge
        {Y}\left(0; c , F( c )\right).
    \end{align*}

\subsection{Proof of Theorem \ref{thm: criterion of mu for A(mu)>b}}

The proof depends on the estimate of ${Y}\left(0;-a,\mu F( c )\right).$
\begin{lem}\label{lem: estimate of y_left}
    Assume that (H1) and (H3) hold for system \eqref{sys: generalized Lienard with mu}. 
    We have
    \begin{align*}
        {Y}\left(0;-a,\mu F( c )\right)-\mu F( c )
        <\kappa \left(\frac{1}{\mu}\right)^{\frac{1}{2l-1}},
    \end{align*}
    where $\kappa$ is defined in \eqref{def: kappa}.
    \begin{proof}
        For convenience, let $\bar g:=\max_{0\le x \le a}\{g(x)\}.$
        For any $\varepsilon>0$,  comparison with the orbit through the point $(-a,\mu F(c)+\varepsilon)$ gives
        \begin{align*}
            {Y}\left(x;-a,\mu F( c )\right)
            <{Y}\left(x;-a,\mu F( c )+\varepsilon\right)
            :=y_\varepsilon(x),
            \quad -a\le x \le 0.
        \end{align*}
        Note that 
        $y_\varepsilon(x)>y_\varepsilon(-a)=\mu F( c )+\varepsilon=\varepsilon-\mu F(a)$
        for $x\in(-a,0].$
        Hence
        \begin{align*}
            y_\varepsilon(0)
            =&y_\varepsilon(-a)+\int_{-a}^0{\frac{g(x)}{\mu F(x)-y_\varepsilon(x)}\mathrm dx}
            \\
            =&\mu F( c ) + \varepsilon
            +\int_{-a}^0{\frac{g(x)}
            {\mu \left( F(x)+F(a) \right)-\left(y_\varepsilon(x)+\mu F(a)\right)}\mathrm dx}
            \\
            <&\mu F( c ) + \varepsilon
            +\int_{-a}^0{\frac{\bar g}
            { \mu{L} \cdot(x+a)^{l}+\varepsilon}\mathrm dx}
            \\
            <&\mu F( c ) + \varepsilon
            + \bar g \left(\frac{1}{\varepsilon}\right)^{\frac{l-1}{l}}
            \left(\frac{1}{\mu {L} }\right)^{\frac{1}{l}}
            \int_{0}^{+\infty}{\frac{\mathrm d s}
            { s^{l}+1}}
        \end{align*}
        Then, we have
        \begin{align*}
             &{Y}\left(0;-a,\mu F( c )\right)-\mu F( c )
             \le \inf_{\varepsilon>0}
             \left\{\varepsilon
            + \bar g \left(\frac{1}{\varepsilon}\right)^{\frac{l-1}{l}}
            \left(\frac{1}{\mu {L} }\right)^{\frac{1}{l}}
            \int_{0}^{+\infty}{\frac{\mathrm d s}
            { s^{l}+1}}\right\}
            \\
            =&\left(2-\frac{1}{l}\right)
            \left(\frac{l}{l-1}\right)^\frac{l-1}{2l-1}
            \left( \bar g \int_{0}^{+\infty}{\frac{\mathrm d s}
            { s^{l}+1}} \right)^{\frac{l}{2l-1}}
             \left(\frac{1}{\mu {L} }\right)^{\frac{1}{2l-1}}
            =\kappa \left(\frac{1}{\mu }\right)^{\frac{1}{2l-1}}.
        \end{align*}
        The infimum is attained at a positive value of $\varepsilon$, so the strict estimate preceding the minimization gives the claimed strict inequality.
    \end{proof}
\end{lem}

Note that the zero $v=v_0$ of $I(v,-a,c)$ in Theorem \ref{thm: criterion of mu for A(mu)>b} is independent of $\mu.$
By Corollary \ref{cor: amplitude > c}, 
for system \eqref{sys: generalized Lienard with mu},
the amplitude of the limit cycle is greater than $c,$
provided that
\[
{Y}\left(0;-a,\mu F( c )\right)-\mu F( c )\le\mu v_0.
\]
Dividing the estimate in Lemma \ref{lem: estimate of y_left} by $\mu$ gives the stated sufficient condition
$\kappa\mu^{-2l/(2l-1)}\le v_0$, and proves Theorem \ref{thm: criterion of mu for A(mu)>b}.

\begin{rmk}
    The parameter criterion in Theorem \ref{thm: criterion of mu for A(mu)>b} relies on an upper bound for either ${Y}\left(0;-a,\mu F(c)\right)$ or ${Y}\left(0;c,\mu F(c)\right)$.
    Estimating the latter may improve the sufficient parameter threshold, since the singularity of $\mathcal F(x,0)$ at $x=c$ is weaker than that at $x=-a$.
    The estimate of ${Y}\left(0;-a,\mu F(c)\right)$ used here gives a simple explicit criterion sufficient for the subsequent application.
\end{rmk}

\section{Results for the van der Pol equation}\label{sec:vdp}

We consider the van der Pol equation \eqref{sys: vdP in equation form} in the form
\begin{align}\label{sys: vdP in standard form}
    \dot x = y,\quad \dot y= -x - \mu (x^2-1) y,
    \quad \mu>0.
\end{align}
In this section, we prove Theorem \ref{thm: A of vdP > 2}.
The proof treats the relaxation oscillation regime $\mu\ge12$,
the weakly nonlinear regime $0<\mu<1$, and the intermediate regime
$1\le\mu\le12$ separately, as outlined below.

For the relaxation oscillation regime, we apply
Theorem \ref{thm: criterion of mu for A(mu)>b} and verify its
sufficient condition for every $\mu\ge12$.

For the weakly nonlinear regime, we construct a family of smooth simple closed comparison curves,
each passing through $(\pm2,0)$,
such that the vector field points strictly outward except at these two points.
We verify that this construction works for every $0<\mu<1$.

For the intermediate regime $1\le\mu\le12$,
we use a polynomial construction originating in \cite{giacomini1997number,giacomini1998improving} and applied to amplitude estimation in \cite{turner2015maximum}; see also the related method in \cite{cao2017estimate}.
We construct an auxiliary energy function whose derivative along
solutions is positive whenever $0<|x|\le2$ throughout this regime.

For the weakly nonlinear and intermediate regimes,
a key step is to verify that certain high-degree bivariate polynomials
with rational coefficients satisfy the required positivity conditions
on suitable rectangles. We perform these checks in Mathematica
using exact rational arithmetic.
For low-degree polynomials, one possible approach is to combine resultant elimination and Sturm root counts for suitable critical-point equations with separate checks on the boundary. For the high-degree polynomials considered here, such elimination can be computationally expensive. The bidegrees reach $(194,48)$ in the weakly nonlinear regime and $(175,87)$ in the intermediate regime. We verify their signs by converting from the power basis to the Bernstein basis
(see, for instance, Zettler and Garloff \cite{zettler1998robustness}
and Farouki \cite{farouki2012bernstein}).
The conversion and the resulting positivity criterion are described below.

\paragraph{Bernstein basis and positivity on rectangles.}
For a nonnegative integer $n$, write the degree-$n$ power and Bernstein bases as row vectors
\[
    \mathbf x_n=(1,x,\ldots,x^n),\qquad
    \mathbf b_n(x)=(b_{0,n}(x),\ldots,b_{n,n}(x)),
\]
where
\[
    b_{j,n}(x):=\binom nj x^j(1-x)^{n-j},\qquad 0\le j\le n.
\]
The change of basis is $\mathbf x_n=\mathbf b_n(x)\,M_n$, where
\begin{equation*}
M_n:=
\begin{pmatrix}
1 & 0 & 0 & \cdots & 0 & \cdots & 0\\[3pt]
1 & \dfrac{1}{\binom n1} & 0 & \cdots & 0 & \cdots & 0\\[12pt]
1 & \dfrac{\binom21}{\binom n1} & \dfrac{1}{\binom n2} & \cdots & 0 & \cdots & 0\\[8pt]
\vdots & \vdots & \vdots & \ddots & \vdots & & \vdots\\[3pt]
1 & \dfrac{\binom i1}{\binom n1} & \dfrac{\binom i2}{\binom n2} & \cdots & \dfrac{1}{\binom ni} & \cdots & 0\\[8pt]
\vdots & \vdots & \vdots & & \vdots & \ddots & \vdots\\[3pt]
1 & \dfrac{\binom n1}{\binom n1} & \dfrac{\binom n2}{\binom n2} & \cdots & \dfrac{\binom ni}{\binom ni} & \cdots & \dfrac{1}{\binom nn}
\end{pmatrix}.
\end{equation*}
Rows and columns are indexed from $0$. 
Equivalently,
$(M_n)_{ij}=\binom ij/\binom nj$ for $j\le i$ and zero otherwise.
In particular, $M_n$ is invertible.

For a polynomial $f(x,y)=\sum_{i=0}^n\sum_{j=0}^m c_{ij}x^iy^j$, 
let
$C_{n,m}(f)=(c_{ij})$ be its power coefficient matrix, retaining zero coefficients up to the chosen degree bounds $(n,m)$.
Its Bernstein coefficient matrix on $[0,1]^2$ is defined as
\begin{equation}\label{def: Bernstein matrix on [0,1]^2}
    B_{n,m}(f):=M_n\,C_{n,m}(f)\,M_m^{\mathsf T},
\end{equation}
so that $f(x,y)=\mathbf b_n(x)\,B_{n,m}(f)\,\mathbf b_m(y)^{\mathsf T}$.
In general, we use its actual bidegree as the degree bounds $(n,m),$
i.e.,
$\deg_{x}f=n$ and $\deg_{y}f=m$.

For a nondegenerate rectangle
$J=[a_1,b_1]\times[a_2,b_2]$, where $a_1<b_1$ and $a_2<b_2$, introduce the local coordinates
\[
    x=a_1+(b_1-a_1)s,\qquad y=a_2+(b_2-a_2)t,
    \qquad (s,t)\in[0,1]^2.
\]
These indexed endpoints are independent of the critical points $a,b,c$ in (H1).
Writing $\Delta_1=b_1-a_1$, the affine substitution matrix in the first variable is
\begin{equation*}\label{eq: affine power matrix}
A_n^{\mathrm{aff}}(a_1,b_1):=
\begin{pmatrix}
1 & a_1 & a_1^2 & \cdots & a_1^n\\[3pt]
0 & \Delta_1 & 2a_1\Delta_1 & \cdots & n a_1^{n-1}\Delta_1\\[3pt]
0 & 0 & \Delta_1^2 & \cdots & \binom n2 a_1^{n-2}\Delta_1^2\\[3pt]
\vdots & \vdots & \vdots & \ddots & \vdots\\[3pt]
0 & 0 & 0 & \cdots & \Delta_1^n
\end{pmatrix},
\end{equation*}
whose entries are
\[
    \bigl(A_n^{\mathrm{aff}}(a_1,b_1)\bigr)_{jk}
    =\begin{cases}
        \displaystyle\binom kj a_1^{k-j}(b_1-a_1)^j,&j\le k,\\
        0,&j>k.
    \end{cases}
\]
Here $a_1^0=1$, also when $a_1=0$, and 
$M_0=A_0^{\mathrm{aff}}=(1)$.
Define 
\[
K_n(a_1,b_1):=M_n\,A_n^{\mathrm{aff}}(a_1,b_1),
\] 
and define the corresponding matrices in the second variable in the same way.
Thus, the Bernstein coefficient matrix on $J$ is defined as
\begin{equation}\label{def: bernstein matrix on J}
    B_{n,m}^J(f):=K_n(a_1,b_1)\,C_{n,m}(f)\,K_m(a_2,b_2)^{\mathsf T}.
\end{equation}
In fact, expanding each power of $a_1+(b_1-a_1)s$ and $a_2+(b_2-a_2)t$ gives
\[
    f\bigl(a_1+(b_1-a_1)s,a_2+(b_2-a_2)t\bigr)
    =\mathbf b_n(s)\,B_{n,m}^J(f)\,\mathbf b_m(t)^{\mathsf T}.
\]

Since the products $b_{i,n}(s)b_{j,m}(t)$ are nonnegative and sum to one on $[0,1]^2$,
we have
\begin{equation}\label{ineq: bernstein rectangle bounds}
    \min_{i,j}\bigl(B_{n,m}^J(f)\bigr)_{ij}
    \le f(x,y)\le
    \max_{i,j}\bigl(B_{n,m}^J(f)\bigr)_{ij},\qquad (x,y)\in J.
\end{equation}
Thus, strictly positive entries of $B_{n,m}^J(f)$ give $f>0$ on the closed rectangle $J$,
and
nonnegative entries of $B_{n,m}^J(f)$, at least one of which is positive, give $f>0$ in the interior of $J$.
Moreover,
when the coefficients of $f$ and the endpoints of $J$ are rational, every entry of $B_{n,m}^J(f)$ is rational.
In this case, sign conditions can be checked using exact arithmetic.
The Mathematica implementation of \eqref{def: bernstein matrix on J} is given in Appendix~\ref{app:bernstein}.

\subsection{Relaxation oscillation regime}
Consider system \eqref{sys: generalized Lienard with mu} with 
\begin{align*}
    F(x)=\frac{x^3}{3}-x,\quad g(x)=x.
\end{align*}
To apply Theorem \ref{thm: criterion of mu for A(mu)>b},
we verify hypotheses (H1)--(H3).
It is easy to see that (H1) holds with $a=1$ and $c=2.$
Next, since $F'(x)/g(x)=x-1/x$ is increasing on $\mathbb R^+,$
(H2) also holds.
Finally, for $x\in[-1,0],$ we have 
\[
F(x)+F(1)=\frac{1}{3}(x-2)(x+1)^2 \le -\frac{2}{3}(x+1)^2,
\]
and thus (H3) holds with $L=2/3$ and $l=2$.
Direct computation gives $\kappa = 3\sqrt[3]{6\pi^2}/4$ in \eqref{def: kappa}.

By a numerical computation,
the positive zero $v=v_0$ of $I(v,-1,2)$ is approximately $0.1158.$
By Theorem \ref{thm: criterion of mu for A(mu)>b},
$A_{\text{van}}(\mu)>2$ for $\mu\ge\mu_*,$
where $\mu_*$ is defined by
$\kappa (\mu_*)^{-\frac{4}{3}}=v_0$
and is approximately $11.262.$
To ensure mathematical rigor and facilitate subsequent algebraic manipulations, 
we establish the integer threshold $\mu=12$ by an exact certificate.

\begin{prop}\label{prop: if mu >= 12, A_van > 2}
    If $\mu\ge12$, then $A_{\text{van}}(\mu)>2$.
\end{prop}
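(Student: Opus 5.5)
We apply Theorem~\ref{thm: criterion of mu for A(mu)>b} with $F(x)=x^3/3-x$, $g(x)=x$, $a=1$, $c=2$, $l=2$, $L=2/3$. Hypotheses (H1)--(H3) were verified above, and $\kappa=3\sqrt[3]{6\pi^2}/4$. The sufficient condition reads $\kappa\mu^{-4/3}\le v_0$. Its left-hand side is decreasing in $\mu$, so it suffices to verify it at $\mu=12$, that is,
\[
  \frac{3\sqrt[3]{6\pi^2}}{4}\,12^{-4/3}\le v_0 .
\]
Here $v_0$ is the unique positive zero of $v\mapsto I(v,-1,2)$. By Theorem~\ref{thm: uniqueness of v0}, $I(v,-1,2)$ has the sign of $v_0-v$. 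Hence $v_0\ge v_*$ follows from $I(v_*,-1,2)>0$ for any rational $v_*>0$.

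Numerically $\kappa\,12^{-4/3}\approx 0.1067$ and $v_0\approx0.1158$. I would therefore take a rational intermediate value, for example $v_*=11/100$, and establish two separate exact inequalities.

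\textbf{Step 1: $\kappa\,12^{-4/3}<v_*$.} Cubing turns this into $\tfrac{27}{64}\cdot 6\pi^2\cdot 12^{-4}<v_*^3$, an elementary inequality in $\pi^2$. It follows from the rational bound $\pi<22/7$ by exact arithmetic.

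\textbf{Step 2: $I(v_*,-1,2)>0$.} Here
\[
  I(v_*,-1,2)=\int_{-1}^{2}\frac{s\,\mathrm ds}{s^3/3-s-2/3-v_*},
\]
and the denominator is a cubic that is negative on $[-1,2]$. Its unique real root exceeds $2$, and the other two roots are complex, close to $-1$. I would evaluate the integral in closed form by partial fractions. The result is a combination of a logarithm and an arctangent whose arguments are algebraic in the roots, which could then be bounded rigorously.

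Since the paper favors polynomial certificates, a cleaner alternative avoids transcendental closed forms. Split the integral at $0$, where the integrand changes sign, and bound each part by Riemann-type sums. On $[-1,0]$ the integrand $s/(\text{negative})$ is positive. On $[0,2]$ it is negative. On subintervals where the integrand is monotone, a lower sum for the positive part and an upper bound for the magnitude of the negative part are computed with rational endpoints. The monotonicity can itself be certified by a polynomial sign check of the derivative's numerator, using Bernstein coefficients as in \eqref{ineq: bernstein rectangle bounds}.

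\textbf{Main obstacle.} The difficult step is Step 2. The margin is small, roughly $v_0-v_*\approx 0.006$. Moreover, near $s=-1$ the denominator nearly vanishes, since $F(s)+F(1)$ vanishes to second order there and only $v_*$ keeps it away from zero. The positive contribution is therefore concentrated in a narrow peak of height about $1/v_*$ and width about $\sqrt{v_*}$. A uniform partition would be wasteful, so I would refine the mesh geometrically near $-1$. Alternatively, on $[-1,-1+\delta]$ I would use the exact primitive of a rational lower bound of the integrand. For instance, write the denominator as $\tfrac13(s-2)(s+1)^2-v_*$ and bound $(s-2)$ between constants, which yields an arctangent lower bound. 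Combined with $\pi$-free rational bounds on the arctangent, this gives a certified lower estimate. Together with Steps 1 and 2 and the monotonicity in $\mu$, Theorem~\ref{thm: criterion of mu for A(mu)>b} then yields $A_{\text{van}}(\mu)>2$ for all $\mu\ge 12$.
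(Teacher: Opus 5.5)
Your reduction is the same as the paper's. You apply the large-$\mu$ criterion with $l=2$, $L=2/3$, use that $\kappa\mu^{-4/3}$ decreases in $\mu$, choose $\bar v=11/100$, check $(\kappa\,12^{-4/3})^3=\pi^2/8192<(22/7)^2/8192<\bar v^3$, and use that $I(v,-1,2)$ has the sign of $v_0-v$.

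The routes differ only in how $I(11/100,-1,2)>0$ is certified. The paper expands the integrand about $x=1/2$ in $s=(2x-1)/3$ and generates the rational Taylor coefficients $c_n$ by a three-term recurrence. It bounds them by $|c_n|\le\frac{60}{23}(45/46)^n$ through the partial-fraction representation. That bound needs the enclosure $61/30<\bar\zeta<51/25$, the fact that the complex roots lie farther from $1/2$ than $\bar\zeta$, and bounds on all three residues. The paper then checks $S_{80}-\mathcal E_{80}>0$ exactly, the odd terms vanishing by symmetry.

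Your quadrature route needs no pole analysis, and it is simpler than you anticipate. With $G(s)=s^3/3-s-2/3-\bar v$ one has $G-sG'=-\bar v-\frac23(1+s^3)<0$ for $s\ge-1$. Hence the integrand $s/G(s)$ is strictly decreasing on all of $[-1,2]$, from $1/\bar v$ to $-2/\bar v$. So you need neither the split at $0$ nor a Bernstein check of monotonicity. The right-endpoint sum on a uniform mesh of width $h$ is a rigorous lower bound, within $3h/\bar v$ of the integral.

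Since $I(11/100,-1,2)\approx0.06$, the choice $h=10^{-3}$ already suffices: its error is at most $3/110\approx0.027$. Graded meshes are therefore optional. If you do use the arctangent minorant near $s=-1$, keep $\delta$ small. Replacing $(2-s)/3$ by $1$ on $[-1,-0.7]$ alone costs about $0.02$ of that margin.

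The trade-off is this. The paper's certificate is compact, $81$ rational terms plus an analytic tail bound, but it rests on the complex-analytic coefficient estimate. Yours is fully elementary, at the price of a few thousand exact rational evaluations. The closed-form logarithm/arctangent variant is also valid. However, it needs interval enclosures of transcendental functions at algebraic arguments, which is messier than either.
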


\begin{proof}
    Let $v_0$ be the unique positive zero of $I(v,-1,2)$,
    and set $\bar v=11/100$.
    For $\kappa = 3\sqrt[3]{6\pi^2}/4$
    and $\mu\ge12,$
    we have
    \[
        \bigl(\kappa\,\mu^{-\frac{4}{3}}\bigr)^3
        \le
        \bigl(\kappa\cdot 12^{-\frac{4}{3}}\bigr)^3
        =\frac{\pi^2}{8192}
        <\frac{(22/7)^2}{8192}
        <\left(\frac{11}{100}\right)^3
        =\bar v^3.
    \]
    If $I(\bar v,-1,2)>0$, Theorem \ref{thm: uniqueness of v0} implies that $\bar v<v_0$;
    thus the conclusion of this proposition follows from Theorem \ref{thm: criterion of mu for A(mu)>b}.
    The remaining goal is to show that 
     $I(\bar v,-1,2)>0.$

    For the van der Pol equation in the form \eqref{sys: generalized Lienard with mu}, we have
    \[
        \mathcal F(x,v)=\frac{x}{\frac{1}{3}(x-2)(x+1)^2-v},
    \]
    where $\mathcal F$ is defined in \eqref{def: mathcal F}.
    We expand $ \mathcal F$ about $x=1/2$, using the normalized
    variable $s=(2x-1)/3$:
    \[
         \mathcal F\!\left(\frac{1+3s}{2},v\right)
        =-\frac{4}{9+8v}\,
          \frac{1+3s}{1 + \frac{9}{9+8v} \left(s- s^2- s^3\right)}
        =\sum_{n=0}^{\infty}c_n(v)s^n,
    \]
    where
    \[
        c_{-1}(v)=0,\qquad
        c_0(v)=-\frac{4}{9+8v},\qquad
        c_1(v)=\left(3-\frac{9}{9+8v}\right)c_0(v),
    \]
    and
    \[
        c_n(v)=\frac{9}{9+8v}
        \bigl(-c_{n-1}(v)+c_{n-2}(v)+c_{n-3}(v)\bigr),
        \qquad n\ge2.
    \]
    We see that $c_n(v)\in\mathbb Q$ if $v\in\mathbb Q.$
    
    Recall that, for $v>0,$ $\bar\zeta(v)\,(>2)$ denotes the unique real root of
    $F(x)-F(c)-v=(x-2)(x+1)^2/3-v=0$. The other two roots are
    \[
        \zeta_{\pm}(v)=-\frac{\bar\zeta(v)}{2}
        \pm i\sqrt{\frac{3(\bar\zeta(v))^2}{4}-3}.
    \]
    Direct computation yields
    \[
        \left|\zeta_{\pm}(v)-\frac12\right|^2
        -\left(\bar\zeta(v)-\frac12\right)^2
        =\frac32\left(\bar\zeta(v)-2\right)>0.
    \]
    Therefore, the Taylor series about $x=1/2$ has
    radius $\bar\zeta(v)-1/2>3/2$ and converges absolutely
    and uniformly for $x\in[-1,2]$.

    The partial-fraction decomposition gives
    \begin{align*}
        \mathcal F(x,v)=&
        \sum_{\rho\in\{\bar\zeta(v),\zeta_+(v),\zeta_-(v)\}}
        \frac{\rho}{\rho^2-1}\frac{1}{x-\rho}
        \\
        =&
        -\sum_{\rho\in\{\bar\zeta(v),\zeta_+(v),\zeta_-(v)\}}
        \frac{\rho}{\rho^2-1}
        \sum_{n=0}^{\infty}\frac{(x-1/2)^n}{(\rho-1/2)^{n+1}}
        \\
        =&
        -\sum_{\rho\in\{\bar\zeta(v),\zeta_+(v),\zeta_-(v)\}}
        \frac{\rho}{\rho^2-1}
        \sum_{n=0}^{\infty}
        \left(\frac{3}{2}\right)^n
        \frac{s^n}{(\rho-1/2)^{n+1}}
    \end{align*}
    This implies that
    \[
        c_n(v)=-\left(\frac32\right)^n
        \sum_{\rho\in\{\bar\zeta(v),\zeta_+(v),\zeta_-(v)\}}
        \frac{\rho}
        {(\rho^2-1)(\rho-1/2)^{n+1}}.
    \]

    We now take $v=\bar v$ and write
    $c_n=c_n(\bar v),\,\bar\zeta=\bar\zeta(\bar v),\,\zeta_{\pm}=\zeta_{\pm}(\bar v).$
    Evaluating $x^3-3x-233/100$ at two rational points yields
    \[
        \frac{61}{30}<\bar\zeta<\frac{51}{25},
    \]
    and hence $\bar\zeta-1/2>23/15$,
    \[
        0<\frac{\bar\zeta}{\bar\zeta^2-1}<\frac23,
    \]
    and
    \[
        \left|
        \frac{\zeta_{\pm}}{\zeta_{\pm}^2-1}
        \right|^2
        =\frac{\bar\zeta^2-3}
        {(\bar\zeta^2-1)(\bar\zeta^2-4)}
        <\frac{940896}{341341}<\frac{25}{9}.
    \]
    Thus, the sum of the absolute values of the three
    residues is less than $4$, and the coefficient
    estimate becomes
    \[
        |c_n|\le
        \frac{60}{23}\left(\frac{45}{46}\right)^n,
        \qquad n\ge0.
    \]

    Termwise integration and the symmetry of the interval
    in $s$ give
    \[
        I(\bar v,-1,2)
        =3\sum_{n=0}^{\infty}\frac{c_{2n}}{2n+1}.
    \]
    For an integer $N\ge0$, set
    \[
        S_N:=3\sum_{n=0}^{N}\frac{c_{2n}}{2n+1}.
    \]
    The remainder is bounded explicitly by
    \begin{align*}
        \bigl|I(\bar v,-1,2)-S_N\bigr|
        \le \frac{180}{23}
        \sum_{n=N+1}^{\infty}
        \frac{(45/46)^{2n}}{2n+1}
        <\frac{16560}{91(2N+3)}
        \left(\frac{45}{46}\right)^{2N+2}
        =:\mathcal E_N.
    \end{align*}
    Recall that $c_n\in\mathbb Q$ since $\bar v \in \mathbb Q.$
    Taking $N=80$, exact rational arithmetic gives
    \[
        I(\bar v,-1,2)
        > S_{80}-\mathcal E_{80}
        >0.
    \]
    The exact rational verification of $S_{80}-\mathcal E_{80}>0$
    is supplied in Appendix~\ref{app:taylor}.
\end{proof}

\subsection{Weakly nonlinear regime}
By the polar coordinate transformation
$x=\rho\cos\theta$ and $y=-\rho\sin\theta$,
system \eqref{sys: vdP in standard form} is changed to
\begin{align*}
    \dot\rho=\mu\rho\sin^2\theta(1-\rho^2\cos^2\theta),
    \quad
    \dot \theta=1+\mu\sin\theta\cos\theta(1-\rho^2\cos^2\theta).
\end{align*}
Setting $r=\rho^2=x^2+y^2$, we obtain
\begin{equation}\label{sys: vdP in polar}
    \begin{cases}
        \dot r = 2\mu r\sin^2\theta(1-r\cos^2\theta)
    :=P(\theta,r,\mu),
    \\
    \dot \theta = 1+\mu\sin\theta\cos\theta(1-r\cos^2\theta)
    :=Q(\theta,r,\mu).
    \end{cases}
\end{equation}
We first show that $\dot\theta$ is strictly positive along the limit cycle of system \eqref{sys: vdP in standard form}.

\begin{lem}\label{lem: theta' does not vanish along the lc}
    Let $\Gamma$ denote the orbit of the limit cycle of system \eqref{sys: vdP in standard form}.
    For any $(\theta,r)$ satisfying $(\sqrt{r}\cos\theta,-\sqrt{r}\sin\theta)\in \Gamma,$
    we have $Q(\theta,r,\mu)>0.$
    \begin{proof}
        For $(x,y)=(\sqrt{r}\cos\theta,-\sqrt{r}\sin\theta),$
        we have 
        \begin{align*}
            r\,Q(\theta,r,\mu)=x^2+y^2+\mu xy(x^2-1):=\Delta(x,y).
        \end{align*}
        Consider the total derivative of $\Delta$ along solutions of system \eqref{sys: vdP in standard form} on the curve $\Delta (x,y)=0:$
        \begin{align*}
            \left.\dot{\Delta}\right|_{\{\Delta=0\}} 
            &= \left.
            \bigl[y\,\Delta_x+(-x-\mu(x^2-1)y)\Delta_y\bigr]
            \right|_{\{\Delta=0\}}
            \\
            &=\left. \mu \left[
            y^2(x^2+1)-(x^2-1)(x^2+\mu x y(x^2-1))
            \right]\right|_{\{\Delta=0\}}
            \\
            &=\left. \mu \left[
            y^2(x^2+1)+y^2(x^2-1)
            \right]\right|_{\{\Delta=0\}}
            \\
            &=\left. 2\mu x^2 y^2 \right|_{\{\Delta=0\}}.
        \end{align*}
        Notice that the curves $\Gamma$ and $\Delta=0$ cannot intersect on the coordinate axes.
        If $\{\Delta=0\}\cap\Gamma\neq\emptyset,$ 
        we must have $\left.\dot{\Delta}\right|_{\{\Delta=0\}\cap\Gamma}>0,$
        which is impossible: the periodic function $t\mapsto\Delta(x(t),y(t))$ cannot have only strict upward crossings of zero.
        It follows that $\{\Delta=0\}\cap\Gamma=\emptyset.$
        Since $\Delta=x^2>0$ at the intersections of $\Gamma$ with the $x$-axis,
        continuity gives $\Delta>0$ on $\Gamma$; hence $Q>0$ there.
        
    \end{proof}
\end{lem}

By Lemma \ref{lem: theta' does not vanish along the lc},
the limit cycle of system \eqref{sys: vdP in standard form} can be characterized by the periodic solution of the equation 
$\mathrm d r/\mathrm d\theta = P(\theta,r,\mu)/Q(\theta,r,\mu),$
denoted by
$r=\varphi(\theta,\mu)$
with $\varphi(0,0)=4.$
% RECHECK 2026-09-09: justify local analyticity of the nonzero periodic branch.
It is well known that this periodic branch has a real-analytic continuation to a neighborhood of $\mu=0$.
In this neighborhood, $\varphi(\theta,\mu)$
can be expressed as a power series expansion in $\mu$:
\begin{align}\label{eq: Taylor expansion of the limit cycle in mu}
    \varphi(\theta,\mu)=\sum_{n=0}^{\infty} \varphi_n(\theta)\mu^n.
\end{align}
The coefficients $\varphi_n(\theta)$ are obtained recursively by substituting the series into the polar equation and imposing periodicity; related perturbation constructions are discussed in \cite{buonomo1998periodic}.
Specifically, substituting $r=\varphi(\theta,\mu)$ into
\begin{align*}
    P(\theta,r,\mu) - Q(\theta,r,\mu) \frac{\mathrm d r}{\mathrm d \theta}
\end{align*}
and equating the coefficients of $\mu^n$ yields
\begin{align*}
    1 :&\; \varphi_0' = 0,
    \\
    \mu :&\; \varphi_1' = 2 \varphi_0 \sin^2\theta ( 1 - \varphi_0 \cos^2\theta ) ,
    \\
    \mu^2 :&\; \varphi_2' = - \varphi_1' \sin\theta \cos\theta ( 1 - \varphi_0 \cos^2\theta ) + 2 \varphi_1 \sin^2\theta ( 1 - 2 \varphi_0 \cos^2\theta ) ,
    \\
    \mu^3 :&\; \varphi_3' = - \varphi_2' \sin\theta \cos\theta (1 - \varphi_0 \cos^2\theta) +2 \varphi_2 \sin^2\theta (1 - 2 \varphi_0 \cos^2\theta) 
    \\
    &\quad~~~ - 2 \varphi_1^2 \sin^2\theta \cos^2\theta + \varphi_1 \varphi_1' \sin\theta \cos^3\theta,
    \\
    \cdots&
    \\
    \mu^n :&\; \varphi_n' = - \varphi_{n-1}' \sin\theta \cos\theta ( 1 - \varphi_0 \cos^2\theta ) + 2 \varphi_{n-1} \sin^2\theta ( 1 - 2 \varphi_0 \cos^2\theta )
    + \tilde{\varphi}_n,
    \\
    \cdots&,
\end{align*}
where $\tilde{\varphi}_n$ is an algebraic combination of $\sin\theta$, $\cos\theta$, and the lower-order coefficients $\varphi_j(\theta)$ and their derivatives, with $0\le j\le n-2$.
From the first two equations, 
it follows that $\varphi_0(\theta)$ is constant and is determined by the periodicity of $\varphi_1(\theta).$ 
That is to say, the constant of integration in $\varphi_0(\theta)$ is obtained by solving 
\[
    \int_0^{2\pi}{\varphi_1'(\theta)\mathrm d \theta}
    =\int_0^{2\pi}2\varphi_0\sin^2\theta(1-\varphi_0\cos^2\theta)\,\mathrm d\theta
    =2\pi \varphi_0\left(1-\frac{\varphi_0}{4}\right)=0.
\]
The nonzero branch gives $\varphi_0=4$, and the other solution, $\varphi_0=0$, corresponds to the equilibrium.

Inductively, assume that $\varphi_0(\theta),\ldots,\varphi_{n-2}(\theta)$ have been fully determined, 
and that $\varphi_{n-1}(\theta)$ has also been obtained up to an undetermined constant.
Then, from the equation for the coefficient of $\mu^n$,
the undetermined constant in $\varphi_{n-1}(\theta)$ is obtained by solving $\int_0^{2\pi}{\varphi_n'(\theta)\mathrm d \theta}=0.$
Moreover, the expression of $\varphi_n(\theta)$ is given by
$\varphi_n(\theta)=\int \varphi_{n}'(\theta)\mathrm d \theta,$
yielding a new undetermined constant of integration.

By the aforementioned structure,
it is not hard to verify the following lemma by induction.
\begin{lem}\label{lem: form of varphi_n}
    Each $\varphi_n(\theta)$ in Eq.~\eqref{eq: Taylor expansion of the limit cycle in mu}
    has one of the following forms:
    \begin{align*}
    \varphi_{2n}(\theta) = \sum_{k=0}^{4n} {a}_{n,k} \cos(2k\theta),
    \quad
    \varphi_{2n+1}(\theta) = \sum_{k=1}^{4n+2} {b}_{n,k} \sin(2k\theta) ,
    \quad n=0,1,2,\ldots,
    \end{align*}
where ${a}_{n,k}$ and ${b}_{n,k}$ are rational numbers.
\begin{proof}
    Central symmetry gives $\pi$-periodicity. The polar equation is invariant under $(\theta,\mu)\mapsto(-\theta,-\mu)$, so uniqueness of the analytic branch gives
    $\varphi_n(-\theta)=(-1)^n\varphi_n(\theta)$.
    Induction in the recurrence shows that $\varphi_n$ has angular frequencies at most $4n$.
    Integration of the nonconstant Fourier modes and the next periodicity condition determine its coefficients by rational operations, starting from $\varphi_0=4$.
    These observations give the stated forms.
\end{proof}
\end{lem}
We list $\varphi_n(\theta)$ up to $n=3$ as follows
\begin{align*}
    &\varphi_0(\theta)=4,\quad \varphi_1(\theta)=\sin (4 \theta )-2 \sin (2 \theta ),
    \\
    &\varphi_2(\theta)=-\frac{3}{4}  \cos (2 \theta )+\frac{1}{4} \cos (4 \theta )+\frac{5}{12} \cos (6 \theta )-\frac{1}{4} \cos (8 \theta )+\frac{3}{8},
    \\
    &\varphi_3(\theta)=\frac{1}{12} \sin (2 \theta )+\frac{5}{192} \sin (4 \theta )-\frac{23}{72} \sin (6 \theta )+\frac{37}{192} \sin (8 \theta ) +\frac{5}{48} \sin (10 \theta )
    \\
    &~~~~~~~~~~
    -\frac{5}{64} \sin (12 \theta ).
\end{align*}
Appendix~\ref{app:weak} implements this recurrence directly in Mathematica. 
The integration constants are fixed
by the same successive periodicity conditions as above.
Additionally, $A_{\text{van}}(\mu)$ has the expansion
\begin{align*}
    \left( A_{\text{van}}(\mu) \right)^2=\varphi(0,\mu)=\sum_{n=0}^{\infty} \varphi_n(0)\mu^n=\sum_{n=0}^{\infty} \varphi_{2n}(0)\mu^{2n}
\end{align*}
for $|\mu|$ small enough.

\subsubsection{Construction of the comparison curves}
By the transformation 
\begin{align*}
    \mu^2=\frac{\lambda}{1-\lambda}=\lambda+\lambda^2+\lambda^3+\cdots,
    \quad 0\le\lambda<1,
\end{align*}
which was introduced in \cite{buonomo1998periodic},
Eq.~\eqref{eq: Taylor expansion of the limit cycle in mu} can be rewritten, for $|\mu|$ sufficiently small, as an expansion in $\lambda$:
\begin{equation}\label{eq: Taylor expansion of the limit cycle in lambda}
    \begin{aligned}
    \varphi(\theta,\mu)=\sum_{n=0}^{\infty} \varphi_n(\theta)\mu^n
    =&\sum_{n=0}^{\infty} \varphi_{2n}(\theta)\mu^{2n}
    +\mu\sum_{n=0}^{\infty} \varphi_{2n+1}(\theta)\mu^{2n}
    \\
    =&\sum_{n=0}^{\infty} \alpha_n(\theta)\lambda^{n}
    +\mu\sum_{n=0}^{\infty} \beta_n(\theta)\lambda^{n},
    \end{aligned}
\end{equation}
where
\begin{equation}\label{eq: relations between alpha_n beta_n and varphi_n}
    \begin{aligned}
   &\alpha_0(\theta):=\varphi_0(\theta)=4,\quad
   \alpha_n(\theta):=\sum_{k=1}^n\binom{n-1}{k-1}\varphi_{2k}(\theta),\quad n\ge1,
   \\
   \text{and}\quad
   &\beta_0(\theta):=\varphi_1(\theta),
   \quad
   \beta_n(\theta):=\sum_{k=1}^n\binom{n-1}{k-1}\varphi_{2k+1}(\theta),
   \quad n\ge1.
\end{aligned}
\end{equation}
Denote the truncation of Eq.~\eqref{eq: Taylor expansion of the limit cycle in lambda} with respect to a positive integer $n$ by
\begin{equation}\label{def: T_n}
\begin{aligned}
    T_n(\theta,\mu):=&
    \sum_{k=0}^{n} \alpha_k(\theta)\lambda^{k}
    +\mu\sum_{k=0}^{n-1} \beta_k(\theta)\lambda^{k}
    \\
    =&\sum_{k=0}^{n} 
    \alpha_k(\theta)
    \left(\frac{\mu^2}{1+\mu^2}\right)^{k}
    +\mu\sum_{k=0}^{n-1} 
    \beta_k(\theta)
    \left(\frac{\mu^2}{1+\mu^2}\right)^{k}.
\end{aligned}
\end{equation}
We see that $T_n(0,\mu)$ is a truncation of $\left( A_{\text{van}}(\mu) \right)^2$
and $T_n(\theta,0)\equiv4.$
For any positive integer $n,$ a candidate family of comparison graphs is given by
\begin{align}\label{def: R_n}
    r=R_n(\theta,\mu):=
    \frac{T_n(0,0)}{T_n(0,\mu)} 
    T_n(\theta,\mu)
    =
    \frac{4 T_n(\theta,\mu)}{T_n(0,\mu)}.
\end{align}
Obviously, this family is well defined when $T_n(0,\mu)\neq0$ and has the following properties:
\begin{itemize}
    \item 
    $R_n(\theta,\mu)$ is a $\pi$-periodic function with respect to $\theta;$
    \item 
    $R_n(\theta,0)\equiv4$ and $R_n(0,\mu)\equiv4.$
\end{itemize}
Letting
\begin{align*}
    V_n(\theta,\mu):=P\left(\theta,R_n(\theta,\mu),\mu\right)-Q\left(\theta,R_n(\theta,\mu),\mu\right)
    \frac{\partial R_n(\theta,\mu)}{\partial \theta},
\end{align*}
we have the following.

\begin{lem}\label{lem: if V>0, A(mu)>2}
    Given a fixed $\mu>0$ and a positive integer $n,$
    if 
    $T_n(0,\mu)>0$  and
    \begin{align}\label{ineq: P - Q r'>0}
        V_n(\theta,\mu)>0
        \quad \text{for}\quad
        \theta\in(0,\pi),
    \end{align}
    then we have $A_{\text{van}}(\mu)>2.$ 
    \begin{proof}
        Clearly, $V_n(\theta,\mu)$ is well defined and $\pi$-periodic for $\theta\in\mathbb R,$
        because of $T_n(0,\mu)>0.$
        We distinguish whether $R_n(\theta,\mu)$ takes a nonpositive value or is everywhere positive.
        Recall that by Lemma \ref{lem: theta' does not vanish along the lc} the limit cycle of system \eqref{sys: vdP in standard form} can be written as $r=\varphi(\theta,\mu)$ for $\theta\in\mathbb R,$
        which is also $\pi$-periodic.
       
        Assume that 
        $R_n(\theta_0,\mu)\le0$ for some $\theta_0\in(0,\pi).$
        Consider the curves $r=R_n(\theta,\mu)$ and $r=\varphi(\theta,\mu)$ in the $(\theta,r)$-plane.
        At any intersection with $\theta\in(\theta_0,\pi)$, Lemma \ref{lem: theta' does not vanish along the lc} gives $Q>0$, and
        \[
            \partial_\theta(\varphi-R_n)=\frac{V_n}{Q}>0.
        \]
        Since $\varphi(\theta_0,\mu)>0\ge R_n(\theta_0,\mu)$, a first-crossing argument gives $R_n(\theta,\mu)<\varphi(\theta,\mu)$ for $\theta\in[\theta_0,\pi)$.
        In fact, $R_n(\pi,\mu)<\varphi(\pi,\mu)$.
        Otherwise, since $Q>0$ along the limit cycle, continuous dependence for the scalar equation $\mathrm dr/\mathrm d\theta=P/Q$ implies that, for sufficiently small $\varepsilon>0$,
        the solution through $\left(\pi,\varphi(\pi,\mu)-\varepsilon\right)$ remains in a neighborhood of the limit cycle where $Q>0$ and lies above $R_n$ at $\theta_0$.
        It must therefore cross $r=R_n(\theta,\mu)$ from above to below at some $\theta_1\in(\theta_0,\pi)$.
        At such a crossing, $V_n(\theta_1,\mu)\le0$, a contradiction.
        Moreover, the comparison principle can now be applied on the interval $(\pi,\theta_0+\pi].$
        By the periodicity,
        it follows that $R_n(\theta,\mu)<\varphi(\theta,\mu)$ for $\theta\in\mathbb R$;
        hence $A_{\text{van}}(\mu)=\sqrt{\varphi(0,\mu)}>\sqrt{R_n(0,\mu)}=2.$

        Assume that $R_n(\theta,\mu)$ is positive for all $\theta\in\mathbb R.$ 
        It follows that
        \begin{align*}
            \mathcal C:=
        \left\{(\sqrt{r}\cos\theta,-\sqrt{r}\sin\theta)\;|\;
         r=R_n(\theta,\mu),\,0\le\theta<2\pi
         \right\}
        \end{align*}
         is a smooth simple closed curve in the $(x,y)$-plane.
         By \eqref{ineq: P - Q r'>0}, the vector field points strictly outward along $\mathcal C$ except possibly at $(\pm2,0)$, which are regular points.
         % RECHECK 2026-09-09: retain the continuous-dependence proof of strict separation.
         By continuity, its outward normal component is nonnegative at these two points as well, so the closed exterior of $\mathcal C$ is forward invariant.
         Since the limit cycle $\Gamma$ attracts every non-equilibrium solution, it lies in this closed exterior and can meet $\mathcal C$ only at $(\pm2,0)$.
         Suppose that $\Gamma$ and $\mathcal C$ are tangent at $(\pm2,0).$
         In this case, the intersection of the exterior of $\mathcal C$ and the interior of $\Gamma$
         consists of two connected components.
         For an orbit passing through a point located in one of the components,
         it has to enter the interior of $\mathcal C$ in positive time,
         thus contradicting the strict outward crossing at every point of $\mathcal{C}$ except $(\pm2,0)$.
         Therefore, $\Gamma$ is disjoint from $\mathcal C$ and lies strictly outside it, which gives $A_{\text{van}}(\mu)>2$.
    \end{proof}
\end{lem}

\begin{rmk}
    For $0<\mu<2$, the assumptions of Lemma \ref{lem: if V>0, A(mu)>2} imply $R_n(\theta,\mu)>0$ for all $\theta$.
    Indeed, at any zero in $(0,\pi)$, we have $Q(\theta,0,\mu)=1+\mu\sin\theta\cos\theta>0$, so $V_n>0$ implies $\partial_\theta R_n<0$.
    Every zero would therefore be a strict downward crossing, which is incompatible with $R_n(0,\mu)=R_n(\pi,\mu)=4$.
    In particular, throughout the range $0<\mu<1$ used below, the comparison curve $\mathcal C$ is well defined.
\end{rmk}

\begin{lem}\label{lem: n=12, P - Q r'>0}
    Taking $n=12,$
    we have $T_n(0,\mu)>0$ on $(0,1)$ and
    $V_n(\theta,\mu)>0$ on $(0,\pi)\times(0,1).$
    \begin{proof}
        % RECHECK 2026-09-09: establish denominator positivity before using the comparison graph.
        Set $D_n(\mu):=(1+\mu^2)^n T_n(0,\mu)$.
        In particular, $D_{12}\in\mathbb Q[\mu^2]$.
        The exact coefficient calculation in Appendix~\ref{app:weak} shows that all coefficients of $D_{12}$ are nonnegative and its constant term is $4$.
        Hence $T_{12}(0,\mu)>0$ for every real $\mu$ and $R_{12}$ is well defined.

        The functions
        \[
        R_n(\theta,\mu),\quad
        P\left(\theta,R_n(\theta,\mu),\mu\right),\quad
        Q\left(\theta,R_n(\theta,\mu),\mu\right)
        \]
        are $\pi$-periodic trigonometric polynomials in $\theta.$
        They can therefore be expressed as sums of terms of the form
        \begin{align*}
            \cos^i \theta  \sin^{2k-i} \theta
            =\frac{\cot^i\theta}{\left(1+\cot^2\theta\right)^k},
            \quad i=0,1,\ldots,2k.
        \end{align*}
        Thus,
        there must be a positive integer $N_1$ such that
        \begin{align*}
            (1+\cot^2\theta)^{N_1}\, V_n(\theta,\mu)
        \end{align*}
        is a polynomial in $\cot\theta$, with coefficients that are rational functions of $\mu$.
        
        On the other hand, Eqs. \eqref{def: T_n} and \eqref{def: R_n} show that
        \[
        D_n(\mu)R_n(\theta,\mu)
        \]
        is a polynomial in $\mu$.
        Since $P$ is quadratic in $r$ and $Q$ is linear in $r$, the expression
        \[
            D_n(\mu)^2 V_n(\theta,\mu)
        \]
        is also a polynomial in $\mu$, with trigonometric polynomial coefficients.
         
        Furthermore, since $R_n(\theta,0)\equiv4,$
        it implies that $V_n(\theta,0)\equiv0.$
        Thus, the above polynomial is divisible by $\mu$.
        Removing the maximal factor $\mu^{N_2}$ yields
        \begin{align*}
            \tilde V_n(\xi,\mu):=
            (1+\cot^2\theta)^{N_1}\,
            \mu^{-N_2}\,
            D_n(\mu)^2\,V_n(\theta,\mu),
        \end{align*}
        which is a polynomial in both $\xi=\cot\theta$ and $\mu$.
        Moreover, by the structure of $\tilde V_n(\xi,\mu),$ 
        it is not hard to see that any of its coefficients is a rational number.
        For $n=12,$
        the exact construction gives
        $N_1=98,N_2=3$ and
        $\deg_{\xi}\tilde V_{12}=194,\,\deg_{\mu}\tilde V_{12}=48.$
        A complete Mathematica construction of $\tilde V_{12}$, with this exact normalization, is given in Appendix~\ref{app:weak}.
        
        Now, in order to verify $V_{12}(\theta,\mu)>0$ on $(0,\pi)\times(0,1),$
        it is equivalent to verify 
        $\tilde V_{12}(\xi,\mu)>0$ on $\mathbb R\times(0,1).$ 
        This is completed by four steps as follows.
        \begin{itemize}
            \item [(i)]
            Divide $\mathbb R\times(0,1)$ into five subdomains:
            \begin{align*}
                &J_0:=\{0\}\times(0,1),
                \quad
                J_1:=(0,1)^2,
                \quad
                J_2:=(-1,0)\times(0,1),
                \\
                &J_3:=[1,+\infty)\times(0,1),
                \quad
                J_4:=(-\infty,-1]\times(0,1).
            \end{align*}

            \item [(ii)]
            For $J_0,$
            the exact coefficient calculation in Appendix~\ref{app:weak} gives
            \[
                \tilde V_{12}(0,\mu)=\mu^{22}\sum_{j=0}^{12}q_j\mu^{2j},
                \qquad q_j>0.
            \]
            Hence $\tilde V_{12}(\xi,\mu)>0$ on $J_0.$

            \item [(iii)]
            The identity and a reflection map $J_1$ and $J_2$, respectively, onto $(0,1)^2$.
            For $J_3$ and $J_4$, reciprocal substitutions give $(0,1]\times(0,1)$, which is contained in $[0,1]^2$.
            Define
            \begin{align*}
            &W_1(\xi,\mu):=\tilde V_{12}(\xi,\mu),
            \quad
            W_2(\xi,\mu):=\tilde V_{12}(-\xi,\mu),
            \\
            &W_3(\xi,\mu):= \xi^{194}\, \tilde V_{12}(1/\xi,\mu),
            \quad
            W_4(\xi,\mu):=W_3(-\xi,\mu).
            \end{align*}
            We see that $W_1,W_2,W_3,W_4\in\mathbb Q[\xi,\mu].$

            \item[(iv)]
            Using the Mathematica code in Appendix~\ref{app:weak}, we compute
            the Bernstein coefficient matrices of $W_i$ on $[0,1]^2$,
            as defined in \eqref{def: Bernstein matrix on [0,1]^2},
            in exact rational arithmetic and obtain
            \begin{align*}
            &\min_{i,j}\bigl(B_{194,48}(W_1)\bigr)_{ij}
            =\min_{i,j}\bigl(B_{194,48}(W_2)\bigr)_{ij}=0,
            \\
            \quad
            &\min_{i,j}\bigl(B_{194,48}(W_3)\bigr)_{ij}=\frac{16}{3},
            \qquad
            \min_{i,j}\bigl(B_{194,48}(W_4)\bigr)_{ij}=\frac{3103}{582}.
            \end{align*}
            The Bernstein positivity criterion \eqref{ineq: bernstein rectangle bounds} gives $W_1,W_2>0$ on $(0,1)^2$ and $W_3,W_4>0$ on $[0,1]^2$.
            The substitutions in step~(iii) therefore give $\tilde V_{12}>0$ on each of $J_1,J_2,J_3,J_4$.
        \end{itemize}
        Now, we have shown that $\tilde V_{12}(\xi,\mu)>0$ on $J_0\cup J_1 \cup J_2 \cup J_3 \cup J_4 = \mathbb R \times (0,1).$
        This completes the proof.
    \end{proof}
\end{lem}

\begin{rmk}
    The reason for using the truncation of \eqref{eq: Taylor expansion of the limit cycle in lambda} instead of \eqref{eq: Taylor expansion of the limit cycle in mu} to construct $R_n(\theta,\mu)$ is that
    the transformation improves the usefulness of the finite truncations in our computations.
    It is motivated by the transformed perturbation expansions in \cite{buonomo1998periodic}; see also the high-order series and resummation study in \cite{amore2018high}.
    The time-phase expansion studied there is distinct from the polar-angle graph used here, so its convergence properties do not directly establish convergence of \eqref{eq: Taylor expansion of the limit cycle in lambda}.
    Every finite real $\mu$ gives $\lambda\in[0,1)$, but this fact alone does not prove convergence on $|\lambda|<1$.
    Our proof uses only a finite truncation and its exact sign certificate, and is independent of any global convergence claim.
\end{rmk}

By Lemmas \ref{lem: if V>0, A(mu)>2} and \ref{lem: n=12, P - Q r'>0}, 
we immediately obtain the following.
\begin{prop}\label{prop: if mu<1, A_van > 2}
    If $0<\mu<1,$ then $A_{\text{van}}(\mu)>2.$
\end{prop}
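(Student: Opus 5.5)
This follows from the two preceding lemmas by instantiating them with $n=12$. Fix $\mu\in(0,1)$. By Lemma~\ref{lem: n=12, P - Q r'>0}, the exact certificate gives two facts. First, $D_{12}(\mu)=(1+\mu^2)^{12}T_{12}(0,\mu)$ has nonnegative coefficients and constant term $4$, so $T_{12}(0,\mu)>0$. Second, $\tilde V_{12}(\xi,\mu)>0$ on $\mathbb R\times(0,1)$.

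We translate the second fact back to $V_{12}$. For $\theta\in(0,\pi)$ we set $\xi=\cot\theta$. Then
\[
V_{12}(\theta,\mu)=(1+\cot^2\theta)^{-98}\,\mu^{3}\,D_{12}(\mu)^{-2}\,\tilde V_{12}(\cot\theta,\mu).
\]
The factors $(1+\cot^2\theta)^{-98}$, $\mu^3$ and $D_{12}(\mu)^{-2}$ are all strictly positive for $\mu\in(0,1)$. Hence $V_{12}(\theta,\mu)>0$ for every $\theta\in(0,\pi)$, which is exactly hypothesis \eqref{ineq: P - Q r'>0} with $n=12$.

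Both hypotheses of Lemma~\ref{lem: if V>0, A(mu)>2} therefore hold for this $\mu$ and $n=12$, and the lemma gives $A_{\text{van}}(\mu)>2$.

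The preceding remark also shows that $R_{12}(\cdot,\mu)>0$ for $0<\mu<2$. So the comparison curve $\mathcal C$ is a genuine simple closed curve and the second case in the proof of Lemma~\ref{lem: if V>0, A(mu)>2} applies. This is only for clarity; the lemma covers both cases anyway.

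The one point that needs care is the passage from $\tilde V_{12}>0$ to $V_{12}>0$. It relies on the normalizing factors $(1+\cot^2\theta)^{N_1}$, $\mu^{-N_2}$ and $D_{12}^2$ being strictly positive on the open parameter range, and on $\theta\mapsto\cot\theta$ being a bijection from $(0,\pi)$ onto $\mathbb R$. Both have already been established, so nothing substantive remains; all the real work sits in the certificates of Lemma~\ref{lem: n=12, P - Q r'>0}.
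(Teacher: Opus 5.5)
Your proof is correct and is the paper's argument: the paper obtains the proposition immediately by combining Lemma~\ref{lem: n=12, P - Q r'>0} (with $n=12$) and Lemma~\ref{lem: if V>0, A(mu)>2}. Your step recovering $V_{12}>0$ from $\tilde V_{12}>0$ is valid but unnecessary, since Lemma~\ref{lem: n=12, P - Q r'>0} already states $T_{12}(0,\mu)>0$ and $V_{12}>0$ on $(0,\pi)\times(0,1)$.
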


\subsection{Intermediate regime}

Consider system \eqref{sys: generalized Lienard with mu} in the form
\begin{align}\label{sys: generalized Lienard with mu in standard from}
    \dot x = y,\quad 
    \dot y = -g(x) - \mu F'(x)  y .
\end{align}
Here again $y$ denotes the velocity coordinate.
For a positive integer $n,$
define an auxiliary energy function by 
\begin{align*}
    H_n(x,y,\mu):=\sum_{k=0}^{2n} {h_{2n-k,n}(x,\mu)y^k},
\end{align*}
where $h_{k,n}(x,\mu)$ is given recursively by
\begin{equation}\label{def: h_{k,n}}
    \begin{aligned}
    &h_{0,n}(x,\mu):=1,
    \quad
    h_{1,n}(x,\mu):=2n \mu F(x),
    \\
    &h_{k,n}(x,\mu) := \int_0^x \Big[ (2n - k + 1) \mu F'(s) h_{k-1,n}(s,\mu)
    \\
    &\hspace{42mm}{}+ (2n - k + 2) g(s) h_{k-2,n}(s,\mu) \Big]\,\mathrm ds
\end{aligned}
\end{equation}
for $k=2,3,\ldots,2n.$

A direct computation, with primes denoting derivatives with respect to $x$, yields
\begin{align*}
    \left.\dot H_n\right|_{\eqref{sys: generalized Lienard with mu in standard from}}
    =& y\left(\sum_{k=0}^{2n}{h_{2n-k,n}'\,y^k} \right)
    -(g+ \mu\,y\,F') \left( \sum_{k=1}^{2n} {k\,h_{2n-k,n}\,y^{k-1}} \right)
    \\
    =& -g\, h_{2n-1,n} + h_{0,n}'\,y^{2n+1}
    \\
    &+ \sum_{k=1}^{2n-1} {\left(h_{2n-k+1,n}'-k\,\mu\,\,F'\,h_{2n-k,n} -(k+1)\, g\, h_{2n-k-1,n}\right)}y^k
    \\
    &+\left(h'_{1,n}-2n\,\mu\,F'\,h_{0,n}\right)y^{2n}
    \\
    =&-g\, h_{2n-1,n}.
\end{align*}
If $g(x)h_{2n-1,n}(x,\mu)$ has a fixed strict sign for $0<|x|\le\delta$,
integrating $\dot H_n$ over a period shows that the limit cycle of system \eqref{sys: generalized Lienard with mu in standard from} cannot lie entirely in $[-\delta,\delta]\times\mathbb R$.
Therefore, its amplitude exceeds $\delta$.

\begin{lem}\label{lem: g h_2n-1 neq 0 for n=88}
    For $g(x)=x,\,F(x)=x^3/3-x$ and $n=88,$
    we have $$-g(x) h_{2n-1,n}(x,\mu)>0$$ on $(0,2]\times[1,12].$
    \begin{proof}
        Under the assumptions of the lemma, 
        it is not hard to deduce by induction that
        \begin{itemize}
            \item 
            $h_{k,n}(x,\mu)\in \mathbb Q[x,\mu],$
            \item 
            $h_{k,n}(-x,\mu) = (-1)^k h_{k,n}(x,\mu),$
            \item 
            $h_{k,n}(x,-\mu) = (-1)^{k} h_{k,n}(x,\mu),$
            \item 
            $h_{k,n}(x,\mu)=\mathcal O(x^{k})\quad (x\rightarrow0),$
        \end{itemize}
        for $k=0,1,\ldots, 2n.$
        Set $\chi=x^2$ and $\nu=\mu^2$. By these properties,
        there is a $\tilde H_n(\chi,\nu)\in \mathbb Q[\chi,\nu]$ such that
        \begin{align*}
            \tilde H_n(x^2,\mu^2)=
            -\frac{g(x)\,h_{2n-1,n}(x,\mu)}{x^{2n} \mu}
            =-\frac{\,h_{2n-1,n}(x,\mu)}{x^{2n-1} \mu}.
        \end{align*}
        Now, the proof reduces to verifying that
        $\tilde H_{88}(\chi,\nu)$ is strictly positive on $[0,4]\times[1,144],$
        with $\deg_{\chi}\tilde H_{88}=175$
        and $\deg_{\nu}\tilde H_{88}=87.$
        We verify this positivity in two steps.
        \begin{itemize}
            \item [(i)]
            Divide $[0,4]\times[1,144]$ into four subdomains:
            \begin{align*}
                &J_1:=[2,4]\times[145/2,144],
                \quad
                J_2:=[0,2]\times[145/2,144],
                \\
                &J_3:=[0,2]\times[1,145/2],
                \quad
                J_4:=[2,4]\times[1,145/2].
            \end{align*}
            Write 
            $J_i=[\chi^i_{\min},\chi^i_{\max}]\times[\nu^i_{\min},\nu^i_{\max}]$
            for $i=1,2,3,4.$

            \item [(ii)]
            Using the Mathematica code in Appendix~\ref{app:energy}, we compute
            the Bernstein coefficient matrices of $\tilde H_{88}$ on $J_i$,
            as defined in \eqref{def: bernstein matrix on J},
            in exact rational arithmetic and obtain
            \begin{align*}
            &\min_{i,j}\bigl(B_{175,87}^{J_1}(\tilde H_{88})\bigr)_{ij}>0,
            \qquad
            \min_{i,j}\bigl(B_{175,87}^{J_2}(\tilde H_{88})\bigr)_{ij}>0,
            \\
            \quad
            &\min_{i,j}\bigl(B_{175,87}^{J_3}(\tilde H_{88})\bigr)_{ij}>0,
            \qquad
            \min_{i,j}\bigl(B_{175,87}^{J_4}(\tilde H_{88})\bigr)_{ij}>0.
            \end{align*}
            The Bernstein positivity criterion
            \eqref{ineq: bernstein rectangle bounds} therefore gives
            $\tilde H_{88}>0$ on each of the four rectangles.
        \end{itemize}
        Thus, we have $\tilde H_{88}(\chi,\nu)>0$ on
        $J_1 \cup J_2 \cup J_3 \cup J_4=[0,4]\times[1,144].$
        This completes the proof.
    \end{proof}
\end{lem}

\begin{rmk}
    The construction in this subsection uses a fixed index $n$ on a compact parameter interval.
    For the van der Pol equation, numerical experiments suggest that the index $n$ required to keep $-x h_{2n-1,n}(x,\mu)$ nonnegative on $[-2,2]$ increases as $\mu$ approaches $0$ or $+\infty$.
    No assertion about the limiting growth of the required index is needed in the proof.
\end{rmk}

By the previous analysis, Lemma \ref{lem: g h_2n-1 neq 0 for n=88} implies the following.
\begin{prop}\label{prop: if 1 <= mu <= 12, A_van > 2}
    If $1\le \mu \le 12,$ then $A_{\text{van}}(\mu)>2.$
\end{prop}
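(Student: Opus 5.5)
The plan is to apply the energy-function argument preceding Lemma~\ref{lem: g h_2n-1 neq 0 for n=88} with $n=88$ and $\delta=2$. Fix $\mu\in[1,12]$. The van der Pol equation \eqref{sys: vdP in standard form} is system \eqref{sys: generalized Lienard with mu in standard from} with $g(x)=x$ and $F(x)=x^3/3-x$. Let $\Gamma$ be its unique limit cycle, parametrized by a solution $(x(t),y(t))$ of period $T>0$.

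Since $H_{88}$ is a polynomial and the orbit is periodic, integrating over one period gives
\[
0=H_{88}(x(T),y(T),\mu)-H_{88}(x(0),y(0),\mu)=\int_0^T -x(t)\,h_{175,88}(x(t),\mu)\,\mathrm dt .
\]
Suppose, for contradiction, that $A_{\text{van}}(\mu)\le2$, so that $|x(t)|\le 2$ for all $t$.

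The integrand has a sign on all of $[-2,2]$, not just on $(0,2]$. By the parity property $h_{k,n}(-x,\mu)=(-1)^k h_{k,n}(x,\mu)$ with $k=175$ odd, the function $-x\,h_{175,88}(x,\mu)$ is even in $x$. Lemma~\ref{lem: g h_2n-1 neq 0 for n=88} makes it strictly positive on $(0,2]$, hence also on $[-2,0)$. It vanishes only at $x=0$. Therefore the integrand is nonnegative, and strictly positive whenever $x(t)\neq0$.

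It remains to see that $x(t)\neq 0$ on a set of positive measure. The limit cycle does not pass through the origin. Any time with $x(t)=0$ has $\dot x=y\ne0$, so such times are isolated. Hence $x(t)\ne0$ for all but finitely many $t\in[0,T]$, the integral is strictly positive, and this contradicts its vanishing. So $|x|$ exceeds $2$ somewhere on $\Gamma$, i.e.\ $A_{\text{van}}(\mu)>2$.

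The only delicate point is extending the sign condition from $(0,2]$ to $0<|x|\le2$, and the parity of $h_{2n-1,n}$ settles it. The substantive work, the positivity certificate for $\tilde H_{88}$, is already contained in Lemma~\ref{lem: g h_2n-1 neq 0 for n=88}.
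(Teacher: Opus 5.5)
Your proof is correct and follows the paper's route: the paper also integrates $\dot H_{88}=-g\,h_{175,88}$ over one period of the limit cycle and invokes Lemma~\ref{lem: g h_2n-1 neq 0 for n=88}. In the paper, the extension to negative $x$ is implicit in the identity $-g\,h_{175,88}=x^{176}\mu\,\tilde H_{88}(x^2,\mu^2)$, which is even in $x$; this is the same parity observation you make explicit, and your isolated-zeros argument just spells out why the integral is strictly positive.
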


\subsection{Proof of Theorem \ref{thm: A of vdP > 2}}
Theorem \ref{thm: A of vdP > 2} follows from Propositions \ref{prop: if mu >= 12, A_van > 2}, \ref{prop: if mu<1, A_van > 2}, and \ref{prop: if 1 <= mu <= 12, A_van > 2}, since their parameter ranges cover $(0,+\infty)$.

\appendix
\numberwithin{table}{section}
\section{Exact computational certificates}\label{app:certificates}

This appendix supplies the complete Mathematica code for the finite
computations used in Propositions~\ref{prop: if mu >= 12, A_van > 2},
\ref{prop: if mu<1, A_van > 2}, and
\ref{prop: if 1 <= mu <= 12, A_van > 2}.
The constructions are given by finite recurrences, followed by exact
coefficient and sign checks. In particular, the certificates do not rely
on numerical integration, approximate root finding, or sampling a polynomial
on a grid.

All listings were evaluated in Mathematica~14.0.0 on
\texttt{Windows-x86-64}. Each assertion succeeded and the resulting
quantities are recorded below. To reproduce the calculations, evaluate
Listings~\ref{lst:common}--\ref{lst:energy-checks} in their displayed order
in a fresh kernel. The listings use only built-in Wolfram Language
functions and require no external data or packages. Every coefficient
appearing in the final polynomials and Bernstein coefficient matrices is an exact
rational number. The command \texttt{must} aborts the evaluation if its argument
does not evaluate to \texttt{True}.

\subsection{The Bernstein coefficient matrix}\label{app:bernstein}

The following routines implement \eqref{def: Bernstein matrix on [0,1]^2} and
\eqref{def: bernstein matrix on J}.

The routine \texttt{coefficientMatrix} stores the coefficient $c_{ij}$
of $x^iy^j$ in row $i+1$ and column $j+1$.
Zero coefficients are retained up to the prescribed degree bounds $(n,m)$.
The routine \texttt{bernsteinMap} returns $K_n(a_1,b_1)$ for the input
\texttt{bernsteinMap[n,\{a1,b1\}]}.
Its local arrays \texttt{base} and \texttt{shift} represent $M_n$ and
$A_n^{\mathrm{aff}}(a_1,b_1)$, respectively.
The Bernstein positivity criterion is \eqref{ineq: bernstein rectangle bounds}.

\begin{lstlisting}[style=wlcert,caption={Shared routines for exact coefficient and Bernstein calculations.},label={lst:common}]
ClearAll["Global`*"];
$HistoryLength = 0;

must[test_, tag_String] := If[!TrueQ[test],
  Print["FAILED: ", tag]; Abort[]];
rationalQ[q_] := IntegerQ[q] || Head[q] === Rational;
coefficientMatrix[f_, vars_, degrees_] := Module[{a},
  must[PolynomialQ[f, vars], "polynomial input"];
  must[And @@ MapThread[Exponent[f, #1] <= #2 &, {vars, degrees}],
    "declared bidegree"];
  a = PadRight[CoefficientList[f, vars], degrees + 1];
  must[VectorQ[Flatten[a], rationalQ], "exact coefficients"];
  a
];

bernsteinMap[d_Integer, {a_, b_}] := Module[{base, shift},
  base = Table[If[j <= i,
    Binomial[i, j]/Binomial[d, j], 0], {i, 0, d}, {j, 0, d}];
  shift = Table[If[j <= k,
    Binomial[k, j] If[j == k, 1, a^(k - j)] (b - a)^j,
    0], {j, 0, d}, {k, 0, d}];
  base . shift
];

matrixReport[b_] := Module[{v = Flatten[b]},
  must[VectorQ[v, rationalQ], "exact Bernstein entries"];
  {Dimensions[b], Count[v, _?(# < 0 &)], Count[v, 0], Min[v]}
];
lowerPower[q_] := Module[{k},
  must[q > 0 && rationalQ[q], "positive rational minimum"];
  k = IntegerLength[Numerator[q]] - IntegerLength[Denominator[q]];
  If[q < 10^k, k--];
  must[10^k <= q < 10^(k + 1), "power-of-ten enclosure"];
  k
];
Print["Environment: ", {$Version, $SystemID}];
\end{lstlisting}

Each output of \texttt{matrixReport} is an ordered list
$\{\text{dimensions},n_-,n_0,m\}$, where $n_-$ and $n_0$ count
the negative and zero entries, respectively, and $m$ is the minimum entry.
For a positive rational number $q$, \texttt{lowerPower[q]} finds an
integer $k$ such that $10^k\le q<10^{k+1}$.
It uses integer digit counts and exact comparisons, so this enclosure
does not involve a floating-point logarithm.

\subsection{The certificate for the relaxation oscillation regime}\label{app:taylor}

For $\bar v=11/100$, the recurrence in the proof of
Proposition~\ref{prop: if mu >= 12, A_van > 2} determines
$c_{-1},c_0,\ldots,c_{160}$ using rational arithmetic alone.
The following code verifies the auxiliary rational bounds used
in that proof and checks that
$S_{80}-\mathcal E_{80}>0$.
The variables \texttt{s80} and \texttt{d80} represent
$S_{80}$ and the remainder bound $\mathcal E_{80}$, respectively.

\begin{lstlisting}[style=wlcert,caption={Exact verification for the relaxation oscillation regime.},label={lst:taylor}]
vbar = 11/100;
q = 9/(9 + 8 vbar);
order = 80;

c = ConstantArray[0, 2 order + 2];
(* c[[j + 2]] represents c_j, including c_{-1}. *)
c[[2]] = -4/(9 + 8 vbar);
c[[3]] = (3 - q) c[[2]];
Do[
  c[[j + 2]] =
    q (-c[[j + 1]] + c[[j]] + c[[j - 1]]),
  {j, 2, 2 order}
];

s80 = 3 Sum[
  c[[2 j + 2]]/(2 j + 1), {j, 0, order}
];
d80 = 16560/(91 (2 order + 3)) *
  (45/46)^(2 order + 2);

rootPolynomial[t_] := t^3 - 3 t - 233/100;

taylorChecks = {
  rootPolynomial[61/30] == -629/27000,
  rootPolynomial[51/25] == 2479/62500,
  940896/341341 < 25/9,
  (22/7)^2/8192 < vbar^3,
  s80 - d80 > 0
};

must[And @@ taylorChecks, "Taylor certificate"];
Print["Taylor checks: ", taylorChecks];
\end{lstlisting}

The printed list consists of five \texttt{True} values,
the last of which corresponds to
$S_{80}-\mathcal E_{80}>0$ in exact rational arithmetic.

\subsection{The certificate for the weakly nonlinear regime}\label{app:weak}

% REVISION 2026-09-19: direct real trigonometric recurrence and certificate.
\paragraph{The coefficient recurrence and periodicity condition.}
We follow the construction of \eqref{eq: Taylor expansion of the limit cycle in mu}
directly, retaining the sine and cosine representation in
Lemma~\ref{lem: form of varphi_n}. For $j\ge1$, the coefficient equation
can be written as $\varphi_j'=\mathcal G_j$, where
\begin{align*}
\mathcal G_j:={}&2\sin^2\theta\,\varphi_{j-1}
 -2\sin^2\theta\cos^2\theta
   \sum_{k=0}^{j-1}\varphi_k\varphi_{j-1-k}\notag\\
 &-\sin\theta\cos\theta\,\varphi_{j-1}'
 +\sin\theta\cos^3\theta
   \sum_{k=0}^{j-1}\varphi_k\varphi_{j-1-k}'.
\end{align*}
For a trigonometric polynomial $f$, let
$\langle f\rangle=(2\pi)^{-1}\int_0^{2\pi}f(\theta)\,\mathrm d\theta$.
Periodicity requires $\langle\mathcal G_j\rangle=0$.

Starting from $\varphi_0=4$, suppose that the provisional
$\varphi_{j-1}$ has zero mean and that its integration constant is
still undetermined. For $j\ge2$, adding a constant $\delta$ to
$\varphi_{j-1}$ yields additional terms for $\mathcal G_j$:
\[
 \delta\,2\sin^2\theta(1-8\cos^2\theta)
 \qquad\text{with}\qquad
 \bigl\langle2\sin^2\theta(1-8\cos^2\theta)\bigr\rangle=-1.
\]
Thus, the required constant is $\delta=\langle\mathcal G_j\rangle$. 
Once this constant is fixed, we
integrate the resulting zero-mean right-hand side term by term:
\[
 \int\cos(2k\theta)\,\mathrm d\theta
   =\frac{\sin(2k\theta)}{2k},\qquad
 \int\sin(2k\theta)\,\mathrm d\theta
   =-\frac{\cos(2k\theta)}{2k}, \quad k\ge1.
\]
The next periodicity condition fixes the new integration constant.
In particular, constructing $T_{12}$ requires
$\varphi_0,\ldots,\varphi_{24}$, so the loop below runs through
$j=25$ to determine the constant in $\varphi_{24}$.
The provisional $\varphi_{25}$ is not used subsequently.

In the code, \texttt{phi[j]} represents $\varphi_j(\theta)$.
The routine \texttt{trigForm} reduces products to a linear combination
of harmonics; \texttt{trigMean} and \texttt{trigPrimitive} act only on
this reduced form. All trigonometric manipulations are symbolic,
and all coefficient calculations use exact rational arithmetic.

\begin{lstlisting}[style=wlcert,caption={Direct trigonometric recurrence with successive periodicity conditions.},label={lst:weak-recurrence}]
nWeak = 12;
trigForm[f_] := Expand[TrigReduce[Expand[f]]];
trigMean[f_] := f /. {Cos[_] -> 0, Sin[_] -> 0};
trigPrimitive[f_] := Expand[f /. {
  Cos[t_] :> Sin[t]/D[t, theta],
  Sin[t_] :> -Cos[t]/D[t, theta]}];

phi[0] = 4;
constantMultiplier = trigForm[
  2 Sin[theta]^2 (1 - 8 Cos[theta]^2)];
must[trigMean[constantMultiplier] == -1,
  "integration-constant multiplier"];
Do[
  rhs = trigForm[
    2 Sin[theta]^2 phi[j - 1]
    - 2 Sin[theta]^2 Cos[theta]^2
      Sum[phi[k] phi[j - 1 - k], {k, 0, j - 1}]
    - Sin[theta] Cos[theta] D[phi[j - 1], theta]
    + Sin[theta] Cos[theta]^3
      Sum[phi[k] D[phi[j - 1 - k], theta], {k, 0, j - 1}]];
  If[j >= 2,
    delta = trigMean[rhs];
    phi[j - 1] = Expand[phi[j - 1] + delta];
    rhs = Expand[rhs + delta constantMultiplier]
  ];
  must[trigMean[rhs] == 0, "periodicity condition"];
  phi[j] = trigPrimitive[rhs];
  must[Expand[D[phi[j], theta] - rhs] == 0,
    "termwise integration"],
  {j, 1, 2 nWeak + 1}
];
firstPhi = {4, Sin[4 theta] - 2 Sin[2 theta],
  3/8 - 3 Cos[2 theta]/4 + Cos[4 theta]/4
    + 5 Cos[6 theta]/12 - Cos[8 theta]/4,
  Sin[2 theta]/12 + 5 Sin[4 theta]/192
    - 23 Sin[6 theta]/72 + 37 Sin[8 theta]/192
    + 5 Sin[10 theta]/48 - 5 Sin[12 theta]/64};
must[And @@ Table[Expand[phi[j] - firstPhi[[j + 1]]] == 0,
  {j, 0, 3}], "first four coefficients"];
must[And @@ Table[
  Expand[(phi[j] /. theta -> -theta) - (-1)^j phi[j]] == 0,
  {j, 0, 2 nWeak}], "coefficient parity"];
Print["Trigonometric coefficients determined through order ",
  2 nWeak];
\end{lstlisting}

\paragraph{The substitution $\xi=\cot\theta$ and normalization.}
This is the same variable substitution used in the proof of
Lemma~\ref{lem: n=12, P - Q r'>0}. Write $U(\xi)=1+\xi^2$.
The addition formulas give
\[
 \cos(2k\theta)=\frac{\mathsf C_k(\xi)}{U(\xi)^k},\qquad
 \sin(2k\theta)=\frac{\mathsf S_k(\xi)}{U(\xi)^k},
\]
where the real polynomials $\mathsf C_k,\mathsf S_k$ satisfy
\begin{align*}
 \mathsf C_0&=1,\qquad \mathsf S_0=0,\\
 \mathsf C_{k+1}&=(\xi^2-1)\mathsf C_k-2\xi\mathsf S_k,\qquad
 \mathsf S_{k+1}=2\xi\mathsf C_k+(\xi^2-1)\mathsf S_k.
\end{align*}
The largest harmonic needed for $\varphi_0,\ldots,\varphi_{24}$
is $\cos(96\theta)$ or $\sin(96\theta)$, so the common denominator
$d_\xi=U^{48}$ suffices. Using
\eqref{eq: relations between alpha_n beta_n and varphi_n} and
\eqref{def: T_n}, the code constructs
\[
 \mathcal A(\xi,\mu)
 =d_\xi(1+\mu^2)^{12}T_{12}(\theta,\mu).
\]
Since $\xi\to+\infty$ corresponds to $\theta\to0^+$,
the coefficient of $\xi^{96}$ in $\mathcal A$ is
$D_{12}(\mu)=(1+\mu^2)^{12}T_{12}(0,\mu)$.
We also check this identity directly by evaluating the trigonometric
coefficients at $\theta=0$.

Set $\mathcal A_1=U\partial_\xi\mathcal A-96\xi\mathcal A$.
Since $\mathrm d\xi/\mathrm d\theta=-U$, we have
\[
 R_{12}=\frac{4\mathcal A}{D_{12}d_\xi},\qquad
 \partial_\theta R_{12}=-\frac{4\mathcal A_1}{D_{12}d_\xi}.
\]
Substituting into $V_{12}=P-Q\partial_\theta R_{12}$ gives
\begin{align*}
 \mu^3\tilde V_{12}={}&
 8\mu\mathcal A d_\xi D_{12}U-32\mu\mathcal A^2\xi^2
 +4\mathcal A_1d_\xi D_{12}U^2\notag\\
 &+4\mu\xi\mathcal A_1d_\xi D_{12}U
 -16\mu\xi^3\mathcal A_1\mathcal A.
\end{align*}
The normalization is exactly the one used in the main text:
\begin{equation*}
 \tilde V_{12}(\xi,\mu)
 =U^{98}\mu^{-3}D_{12}(\mu)^2 V_{12}(\theta,\mu).
\end{equation*}
The code variables \texttt{apol},
\texttt{dpol}, and \texttt{ell} represent $\mathcal A$, $D_{12}$,
and $\mathcal A_1$, respectively.

\begin{lstlisting}[style=wlcert,caption={Real trigonometric substitution and exact construction of $\tilde V_{12}$.},label={lst:weak-polynomial}]
u = 1 + xi^2; denXi = u^(4 nWeak);
cosNumer[0] = 1; sinNumer[0] = 0;
Do[
  cosNumer[k + 1] = Expand[
    (xi^2 - 1) cosNumer[k] - 2 xi sinNumer[k]];
  sinNumer[k + 1] = Expand[
    2 xi cosNumer[k] + (xi^2 - 1) sinNumer[k]],
  {k, 0, 4 nWeak - 1}
];
cotNumer[j_Integer] := Module[{ac, bs, constant, restored},
  constant = trigMean[phi[j]];
  ac = Table[Coefficient[phi[j], Cos[2 k theta]], {k, 1, 2 j}];
  bs = Table[Coefficient[phi[j], Sin[2 k theta]], {k, 1, 2 j}];
  must[VectorQ[Join[{constant}, ac, bs], rationalQ],
    "rational trigonometric coefficients"];
  restored = constant + Sum[
    ac[[k]] Cos[2 k theta] + bs[[k]] Sin[2 k theta],
    {k, 1, 2 j}];
  must[Expand[phi[j] - restored] == 0, "harmonic support"];
  Expand[constant denXi + Sum[
    (ac[[k]] cosNumer[k] + bs[[k]] sinNumer[k])
      u^(4 nWeak - k), {k, 1, 2 j}]]
];
Do[f[j] = cotNumer[j], {j, 0, 2 nWeak}];
a[0] = f[0]; b[0] = f[1];
Do[a[j] = Sum[Binomial[j - 1, k - 1] f[2 k], {k, 1, j}],
  {j, 1, nWeak}];
Do[b[j] = Sum[Binomial[j - 1, k - 1] f[2 k + 1], {k, 1, j}],
  {j, 1, nWeak - 1}];
apol = Expand[
  Sum[a[k] mu^(2 k) (1 + mu^2)^(nWeak - k), {k, 0, nWeak}]
  + mu Sum[b[k] mu^(2 k) (1 + mu^2)^(nWeak - k),
      {k, 0, nWeak - 1}]];
dpol = Coefficient[apol, xi, 8 nWeak];
alphaAtZero[0] = 4;
Do[alphaAtZero[j] = Sum[Binomial[j - 1, k - 1]
  (phi[2 k] /. theta -> 0), {k, 1, j}], {j, 1, nWeak}];
must[Expand[dpol - Sum[alphaAtZero[k] mu^(2 k)
  (1 + mu^2)^(nWeak - k), {k, 0, nWeak}]] == 0,
  "normalization at theta = 0"];
ell = Expand[u D[apol, xi] - 8 nWeak xi apol];
raw = Expand[8 mu apol denXi dpol u - 32 mu apol^2 xi^2
  + 4 ell denXi dpol u^2 + 4 mu xi ell denXi dpol u
  - 16 mu xi^3 ell apol];
must[Take[CoefficientList[raw, mu], 3] == {0, 0, 0},
  "factor mu^3"];
must[Coefficient[raw, mu, 3] =!= 0, "maximal power of mu"];
vtilde = Expand[raw/mu^3];
weakCoefficients = coefficientMatrix[vtilde, {xi, mu}, {194, 48}];
must[{Exponent[vtilde, xi], Exponent[vtilde, mu]} == {194, 48},
  "weak bidegree"];
must[Length[CoefficientRules[vtilde, {xi, mu}]] == 4752,
  "weak monomial count"];
dCoefficients = CoefficientList[dpol, mu];
must[And @@ Thread[dCoefficients >= 0] && First[dCoefficients] == 4,
  "positive denominator"];
must[Exponent[dpol, mu] == 24 &&
  Expand[dpol - (dpol /. mu -> -mu)] == 0, "even denominator"];
axisCoefficients = CoefficientList[vtilde /. xi -> 0, mu];
axisPowers = Flatten[Position[axisCoefficients, Except[0],
  {1}, Heads -> False]] - 1;
must[axisPowers == Range[22, 46, 2] &&
  And @@ Thread[axisCoefficients >= 0], "positive axis polynomial"];
Print["Weak polynomial: ", {194, 48, 4752}];
Print["Denominator coefficients at orders 0, 2, 4: ",
  dCoefficients[[{1, 3, 5}]]];
Print["Axis exponents: ", axisPowers];
\end{lstlisting}

The construction yields bidegree $(194,48)$ and $4752$ nonzero monomials.
It verifies the two univariate certificates
\begin{equation*}
 D_{12}(\mu)=\sum_{j=0}^{12}d_j\mu^{2j},\qquad
 \tilde V_{12}(0,\mu)=\mu^{22}\sum_{j=0}^{12}q_j\mu^{2j},
\end{equation*}
where $d_0=4$, all $d_j\ge0$, and all $q_j>0$.
For example, $d_1=1153/24$ and $d_2=18281731/69120$.
The complete exact coefficients are retained in
\texttt{dCoefficients} and \texttt{axisCoefficients}. 
The checks on
their signs also give $D_{12}(\mu)\ge4$ for all real $\mu$
and $\tilde V_{12}(0,\mu)>0$ for $\mu>0$.

\paragraph{Bernstein verification.}
The matrix \texttt{weakCoefficients} is the $(195\times49)$ power
coefficient matrix of $\tilde V_{12}$, with the convention of
\eqref{def: Bernstein matrix on [0,1]^2}.
The reflection $\xi\mapsto-\xi$ multiplies row $i+1$ by $(-1)^i$.
The reciprocal transformation $p(\xi,\mu)\mapsto\xi^{194}p(1/\xi,\mu)$
reverses the row order. Since $194$ is even, these operations commute.
Thus, the four matrices below represent exactly $W_1,W_2,W_3,W_4$
in the proof of Lemma~\ref{lem: n=12, P - Q r'>0}.

\Needspace{24\baselineskip}
\begin{lstlisting}[style=wlcert,caption={Exact Bernstein checks for the four transformed polynomials in the weakly nonlinear regime.},label={lst:weak-checks}]
bx = bernsteinMap[194, {0, 1}];
by = bernsteinMap[48, {0, 1}];
negated = Table[(-1)^(i - 1) weakCoefficients[[i]], {i, 1, 195}];
weakMatrices = {weakCoefficients, negated,
  Reverse[weakCoefficients], Reverse[negated]};
weakReports = Table[
  matrixReport[bx . mat . Transpose[by]], {mat, weakMatrices}];
must[weakReports[[All, 1]] == ConstantArray[{195, 49}, 4],
  "weak matrix dimensions"];
must[weakReports[[All, 2]] == {0, 0, 0, 0},
  "weak Bernstein signs"];
must[weakReports[[All, 3]] == {44, 44, 0, 0},
  "weak Bernstein zero counts"];
must[weakReports[[All, 4]] == {0, 0, 16/3, 3103/582},
  "weak Bernstein minima"];
Print["Weak Bernstein reports: ", weakReports];
\end{lstlisting}

The exact outputs are summarized in Table~\ref{tab:weak-certificate},
whose columns headed Negative, Zero, and Positive count the corresponding  Bernstein coefficients.
The last column of Table~\ref{tab:weak-certificate} gives the minimum
Bernstein coefficient.
Each matrix contains $195\cdot49=9555$ entries. 

\begin{table}[htbp]
\centering
\caption{Exact Bernstein certificates for the weakly nonlinear regime.}
\label{tab:weak-certificate}
\begin{tabular}{c|r|r|r|c}
\hline
Polynomial & Negative & Zero & Positive & Minimum coefficient \\
\hline
$W_1$ & $0$ & $44$ & $9511$ & $0$ \\
$W_2$ & $0$ & $44$ & $9511$ & $0$ \\
$W_3$ & $0$ & $0$ & $9555$ & $16/3$ \\
$W_4$ & $0$ & $0$ & $9555$ & $3103/582$ \\
\hline
\end{tabular}
\end{table}

\subsection{The certificate for the intermediate regime}\label{app:energy}

The recurrence for $h_{k,n}$ can be evaluated more economically after
factoring out its known parity and vanishing order. Set
\[
 h_{k,n}(x,\mu)=x^k\mu^{\varepsilon_k}
 p_{k,n}(\chi,\nu),\qquad
 \chi=x^2,\quad\nu=\mu^2,\quad
 \varepsilon_k=k\bmod2.
\]
For a polynomial $p(\chi,\nu)=\sum p_{ij}\chi^i\nu^j$, define
\[
 \mathcal J_k[p]
 =\sum_{i,j}\frac{p_{ij}}{k+2i}\chi^i\nu^j,
 \qquad k\ge1.
\]
Direct substitution into Eq.~\eqref{def: h_{k,n}} gives
\begin{align*}
 p_{0,n}&=1,\qquad p_{1,n}=2n(\chi/3-1),\notag\\
 p_{k,n}&=\mathcal J_k\bigl[
 (2n-k+1)\nu^{1-\varepsilon_k}(\chi-1)p_{k-1,n}
 +(2n-k+2)p_{k-2,n}\bigr].
\end{align*}
Indeed, integration of $x^{k-1+2i}$ produces the denominator $k+2i$.
For $n=88$, the desired polynomial is
$\tilde H_{88}=-p_{175,88}$.
In the code for this subsection, \texttt{xi} denotes $\chi=x^2$ and
\texttt{nu} denotes $\nu=\mu^2$.

\begin{lstlisting}[style=wlcert,caption={Exact construction of the reduced energy polynomial.},label={lst:energy-recurrence}]
nEnergy = 88;
integrateReduced[poly_, k_Integer] := Total[(
  Last[#] xi^#[[1, 1]] nu^#[[1, 2]]/(k + 2 #[[1, 1]]) & /@
  CoefficientRules[Expand[poly], {xi, nu}])];
previous2 = 1; previous1 = 2 nEnergy (xi/3 - 1);
Do[
  current = integrateReduced[
    (2 nEnergy - k + 1) nu^Boole[EvenQ[k]] (xi - 1) previous1
      + (2 nEnergy - k + 2) previous2, k];
  previous2 = previous1; previous1 = current,
  {k, 2, 2 nEnergy - 1}
];
htilde = Expand[-previous1];
must[{Exponent[htilde, xi], Exponent[htilde, nu]} == {175, 87},
  "energy bidegree"];
must[Length[CoefficientRules[htilde, {xi, nu}]] == 7832,
  "energy monomial count"];
energyCoefficients = coefficientMatrix[htilde, {xi, nu}, {175, 87}];
Print["Energy polynomial generated: ", {175, 87, 7832}];
\end{lstlisting}

The result has bidegree $(175,87)$ and $7832$ nonzero monomials.
We now apply \eqref{def: bernstein matrix on J} to the four
rectangles $J_i$ in the proof of
Lemma~\ref{lem: g h_2n-1 neq 0 for n=88}, in the same order as there.

\begin{lstlisting}[style=wlcert,caption={Exact Bernstein verification of the four rectangles in the intermediate regime.},label={lst:energy-checks}]
domains = {{{2, 4}, {145/2, 144}}, {{0, 2}, {145/2, 144}},
  {{0, 2}, {1, 145/2}}, {{2, 4}, {1, 145/2}}};
energyReports = Table[
  matrixReport[bernsteinMap[175, box[[1]]] . energyCoefficients .
    Transpose[bernsteinMap[87, box[[2]]]]], {box, domains}];
must[energyReports[[All, 2]] == {0, 0, 0, 0} &&
  energyReports[[All, 3]] == {0, 0, 0, 0}, "energy positivity"];
energyPowers = lowerPower /@ energyReports[[All, 4]];
must[energyPowers == {87, 116, 26, 6}, "energy lower bounds"];
Print["Energy dimensions, negative counts, zero counts: ",
  energyReports[[All, 1 ;; 3]]];
Print["Energy lower-bound exponents: ", energyPowers];
\end{lstlisting}

The exact outputs are summarized in Table~\ref{tab:energy-certificate}.
Each resulting matrix has $176\cdot88=15488$ entries, all strictly
positive. To present the minima compactly, write
$m_k=\min_{i,j}\bigl(B_{175,87}^{J_k}(\tilde H_{88})\bigr)_{ij}$. The final estimates give
\begin{equation*}
 m_1\ge10^{87},\qquad
 m_2\ge10^{116},\qquad
 m_3\ge10^{26},\qquad
 m_4\ge10^6.
\end{equation*}
The command \texttt{energyReports[[i,4]]} returns the full
exact value of $m_i$.

\begin{table}[!htbp]
\centering
\caption{Exact Bernstein certificates for the intermediate regime.}
\label{tab:energy-certificate}
\begin{tabular}{c|r|r|r|c}
\hline
Rectangle & Negative & Zero & Positive & Lower bound for $m_i$ \\
\hline
$J_1$ & $0$ & $0$ & $15488$ & $10^{87}$ \\
$J_2$ & $0$ & $0$ & $15488$ & $10^{116}$ \\
$J_3$ & $0$ & $0$ & $15488$ & $10^{26}$ \\
$J_4$ & $0$ & $0$ & $15488$ & $10^6$ \\
\hline
\end{tabular}
\end{table}

\FloatBarrier

\bibliographystyle{plain}
\bibliography{ref}

\end{document}